\newcommand{\defi}[1]{{\upshape\sffamily #1}}
\DeclareMathOperator{\ShHom}{\mathscr{H}\text{\kern -3pt {\calligra\large om}}\,}
\renewcommand{\b}{\beta}
\newcommand{\bw}{\bigwedge}
\newcommand{\K}{\bb{K}}
\renewcommand{\ll}{\lambda}
\newcommand{\LL}{\mathbb{L}}
\newcommand{\oo}{\otimes}
\renewcommand{\SS}{\mathbb{S}}
\newcommand{\zt}{\mathbb{Z}/2\mathbb{Z}}
\newcommand{\x}{\underline{x}}
\newcommand{\Ext}{\operatorname{Ext}}
\newcommand{\GL}{\operatorname{GL}}
\newcommand{\g}{\mathfrak{g}}
\newcommand{\gl}{\mathfrak{gl}}
\newcommand{\Hom}{\operatorname{Hom}}
\newcommand{\HS}{\operatorname{HS}}
\newcommand{\Ind}{\operatorname{Ind}}
\newcommand{\p}{\mathfrak{p}}
\newcommand{\Sym}{\operatorname{Sym}}
\newcommand{\Tor}{\operatorname{Tor}}
\newcommand{\coker}{\operatorname{coker}}
\newcommand{\dom}{\operatorname{dom}}
\newcommand{\id}{\operatorname{id}}
\newcommand{\reg}{\operatorname{reg}}
\newcommand{\supp}{\operatorname{supp}}
\newcommand{\bb}[1]{\mathbb{#1}}
\newcommand{\mc}[1]{\mathcal{#1}}
\newcommand{\mf}[1]{\mathfrak{#1}}
\newcommand{\tl}[1]{\tilde{#1}}
\newcommand{\ul}[1]{\underline{#1}}
\def\lra{\longrightarrow}
\newtheorem{theorem}{Theorem}[section]
\newtheorem*{theorem*}{Theorem}
\newtheorem*{problem*}{Problem}
\newtheorem{lemma}[theorem]{Lemma}
\newtheorem{conjecture}[theorem]{Conjecture}
\newtheorem*{corollary*}{Corollary}
\newtheorem*{main-thm*}{Main Theorem}
\newtheorem*{linear-resolutions*}{Theorem on Linear Resolutions}
\newtheorem*{regularity-powers*}{Theorem on Regularity}
\newtheorem*{injectivity-Ext*}{Theorem on Injectivity of Maps of Ext Modules}
\newtheorem*{Kodaira*}{Kodaira Vanishing for Determinantal Thickenings}
\theoremstyle{definition}
\newtheorem*{definition*}{Definition}
\newtheorem{example}[theorem]{Example}
\theoremstyle{remark}
\newtheorem*{remark*}{Remark}
\numberwithin{equation}{section}
\newdimen\unitsize\setlength{\unitsize}{0.25cm}
\newif\ifnumbered
\newcommand\rawpath[1]
\def\st{+1}
\def\st{+1}
\def\st{-1}
\begin{document}

\title[Syzygies and representations of $\gl(m|n)$]{Syzygies of determinantal thickenings and representations of the general linear Lie superalgebra}

\author{Claudiu Raicu}
\address{Department of Mathematics, University of Notre Dame, 255 Hurley, Notre Dame, IN 46556\newline
\indent Institute of Mathematics ``Simion Stoilow'' of the Romanian Academy}
\email{craicu@nd.edu}

\author{Jerzy Weyman}
\address{Department of Mathematics, University of Connecticut, Storrs, CT 06269}
\email{jerzy.weyman@uconn.edu}

\subjclass[2010]{Primary 13D02, 14M12, 17B10}

\date{\today}

\keywords{Determinantal thickenings, syzygies, BGG correspondence, general linear Lie superalgebra, Kac modules, Dyck paths}

\begin{abstract} 
 We let $S=\bb{C}[x_{i,j}]$ denote the ring of polynomial functions on the space of $m\times n$ matrices, and consider the action of the group $\GL=\GL_m\times\GL_n$ via row and column operations on the matrix entries. For a $\GL$-invariant ideal $I\subseteq S$ we show that the linear strands of its minimal free resolution translate via the BGG correspondence to modules over the general linear Lie superalgebra $\gl(m|n)$. When $I=I_{\ll}$ is the ideal generated by the $\GL$-orbit of a highest weight vector of weight $\ll$, we give a conjectural description of the classes of these $\gl(m|n)$-modules in the Grothendieck group, and prove that our prediction is correct for the first strand of the minimal free resolution.
\end{abstract}

\maketitle

\section{Introduction}\label{sec:intro}

We consider the vector space $\bb{C}^{m\times n}$ of $m\times n$ complex matrices ($m\geq n$) and let $S=\bb{C}[x_{i,j}]$ denote its coordinate ring. The group $\GL=\GL_m(\bb{C})\times\GL_n(\bb{C})$ acts on $\bb{C}^{m\times n}$ via row and column operations, making $S$ into a $\GL$-representation whose decomposition into irreducible representations is governed by Cauchy's formula: if we write $\bb{N}^n_{\dom}$ for the set of partitions with at most $n$ parts (i.e. dominant weights in $\bb{Z}^n$ with non-negative entries) and write $\SS_{\ll}$ for the \defi{Schur functor} associated to a partition $\ll$ then we have using \cite[Corollary~2.3.3]{weyman} that
\begin{equation}\label{eq:Cauchy}
S = \bigoplus_{\ll\in\bb{N}^n_{\dom}} \SS_{\ll}\bb{C}^m \oo \SS_{\ll}\bb{C}^n.
\end{equation}
When $I\subseteq S$ is a $\GL$-invariant ideal, the \defi{syzygy modules} $\Tor_i^S(I,\bb{C})$ are naturally representations of $\GL$, but their explicit description is known only in special cases \cites{lascoux,akin-buchsbaum-weyman,pragacz-weyman,raicu-weyman-syzygies}. By contrast, $\Ext^i_S(I,S)$ can be described for every $\GL$-invariant ideal $I\subseteq S$ as explained in \cite{raicu-regularity}. A special class of $\GL$-invariant ideals consists of the ones generated by a single summand $\SS_{\ll}\bb{C}^m \oo \SS_{\ll}\bb{C}^n$ in (\ref{eq:Cauchy}), and are denoted by $I_{\ll}$: one can think of them as \defi{principal $\GL$-invariant ideals}, in the sense that they are generated by the $\GL$-orbit of a single highest weight vector. The goal of this article is to propose a conjectural description of $\Tor_i^S(I_{\ll},\bb{C})$ for an arbitrary partition $\ll$, and to give supporting evidence for our conjecture.

To formulate our conjecture we re-express the problem of computing syzygies into one about modules over the exterior algebra via the BGG correspondence (described in Section~\ref{subsec:BGG}). We then relate this to the representation theory of the general linear Lie superalgebra $\gl(m|n)$ (discussed in Section~\ref{subsec:glmn}), and prove the following (see Theorem~\ref{thm:strands=glmn} for a more precise statement).

\begin{theorem*}
 The linear strands of the minimal free resolution of a $\GL$-invariant ideal translate via the BGG correspondence to finite length $\gl(m|n)$-modules.
\end{theorem*}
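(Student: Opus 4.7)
The plan is to combine the $\GL$-equivariant BGG correspondence with Kac-style induction from a parabolic subalgebra of $\gl(m|n)$. Let $W = \bb{C}^m \oo \bb{C}^n$ and $E = \bigwedge W$ (possibly after dualization); write the $\bb{Z}$-grading of $\gl(m|n)$ as $\gl(m|n) = \gl(m|n)_{-1} \oplus \gl(m|n)_{0} \oplus \gl(m|n)_{1}$, with $\gl(m|n)_{0} = \gl_m \oplus \gl_n$, and parabolic $\p := \gl(m|n)_{0} \oplus \gl(m|n)_{1}$.

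First I would apply the $\GL$-equivariant BGG correspondence reviewed in Section~\ref{subsec:BGG} to convert the $d$-th linear strand of the minimal free resolution of a $\GL$-invariant ideal $I \subseteq S$ into a graded $\GL$-equivariant $E$-module
\[
L^d(I) \;=\; \bigoplus_{i \ge 0} \Tor_i^S(I,\bb{C})_{i+d},
\]
whose $E$-action is assembled from the linear pieces of the resolution differentials. The PBW theorem for Lie superalgebras gives $U(\p) \cong U(\gl(m|n)_{0}) \oo E$ as vector spaces, so the compatibility of the $\GL$- and $E$-actions (which is precisely the $\GL$-equivariance of the differentials) upgrades $L^d(I)$ to a $\p$-module. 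To produce a module over the full $\gl(m|n)$, I would then apply Kac-style induction $L^d(I) \mapsto U(\gl(m|n)) \oo_{U(\p)} L^d(I)$; as a $\p$-module this is identified with $\bigwedge \gl(m|n)_{-1} \oo L^d(I)$, and its restriction to $\p$ recovers the original strand.

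The main obstacle is establishing the finite length assertion. Since $L^d(I)$ is generically an infinite sum of irreducible $\gl(m|n)_{0}$-representations, the induced $\gl(m|n)$-module is not automatically of finite length, and one must show that only finitely many simple $\gl(m|n)$-modules (for example Kac modules) can appear as composition factors. For this I would invoke the $\GL$-equivariant descriptions of $\Tor$ and $\Ext$ from \cite{raicu-regularity}, combined with the combinatorics of $\gl(m|n)$-highest weights reviewed in Section~\ref{subsec:glmn}, to bound the support of the Grothendieck class attached to the strand. I expect this last step to be the technical heart of the proof.
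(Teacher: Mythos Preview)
Your proposal has a genuine gap at the step where you try to put a $\g=\gl(m|n)$-structure on the strand. The exterior algebra $E$ is the enveloping algebra of $\g_{-1}$, so a $\GL$-equivariant $E$-module is a module over the \emph{opposite} parabolic $\bar{\p}=\g_0\oplus\g_{-1}$, not over $\p=\g_0\oplus\g_1$. More importantly, inducing to $\g$ does not endow $L^d(I)$ itself with a $\g$-module structure: the induced module $U(\g)\oo_{U(\bar{\p})}L^d(I)$ is a strictly larger object (isomorphic as a vector space to $\bw\g_1\oo L^d(I)$, as you note), and there is no reason for it to admit $L^d(I)$ as a $\g$-submodule or $\g$-quotient. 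Your sentence ``its restriction to $\p$ recovers the original strand'' is therefore not enough to conclude the theorem, which asserts that the strand itself carries a $\g$-action extending its $(\g_0,E)$-structure.

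The paper's approach is different and uses the Cauchy/symmetric structure of $I$ in an essential way. One works with the dual picture: the terms of the BGG complex $\tl{\bf R}(I)$ are $I_t^\vee\oo E$, and since $I_t=\bigoplus_{|\ll|=t}\SS_\ll V_0\oo\SS_\ll V_1$, each term is already a finite direct sum of Kac modules $\K_\ll=E\oo(\SS_\ll W_0\oo\SS_\ll W_1)$, hence already a $\g$-module. The point is then that the differentials, a priori only $\g_0$-equivariant $E$-module maps $\K_\mu\to\K_\ll$ with $|\mu|=|\ll|+1$, are automatically $\g$-equivariant: this is the key Lemma~\ref{eq:g0-equiv-is-gequiv}, proved by observing that both $\Hom_{\g_0,E}(\K_\mu,\K_\ll)$ and $\Hom_\g(\K_\mu,\K_\ll)$ are one-dimensional (via Pieri and Frobenius reciprocity, respectively) and hence coincide. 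No induction is needed.

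Finally, you have misidentified the hard part. Since $I$ is a finitely generated $S$-module, each $\Tor_i^S(I,\bb{C})$ is finite-dimensional and vanishes for $i\gg 0$, so $L^d(I)$ is finite-dimensional and finite length is immediate once the $\g$-structure is in place. The descriptions from \cite{raicu-regularity} are not needed here; the substance of the theorem lies entirely in producing the $\g_1$-action, which is where your argument breaks down.
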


For the ideals of minors of the generic matrix this follows from \cite{pragacz-weyman}, and was given an alternative proof in \cite{sam} who also treats the case of symmetric and skew-symmetric matrices. For principal $\GL$-invariant ideals $I_{\ll}$ where $\ll$ is a rectangular partition, the theorem is implicit in \cite{raicu-weyman-syzygies}. Equipped with this structural result, we analyze the situation of the ideals $I_{\ll}$ for a general $\ll$. In Conjecture~\ref{conj:main} we propose an explicit formula for the class in the Grothendieck group of $\gl(m|n)$-representations of the modules encoding the linear strands of the minimal resolution of $I_{\ll}$. Our description uses the combinatorics of Dyck paths, and consists of a modification of the combinatorial rules describing type A parabolic Kazhdan--Lusztig polynomials. Using the explicit description of the submodule lattice of Kac modules from \cite{su-zhang}, we verify in Theorem~\ref{thm:linear-strand} that our conjecture correctly predicts the first linear strand of the minimal resolution of any $I_{\ll}$.

The article is organized as follows. In Section~\ref{sec:prelim} we give some background on the combinatorics of partitions and Dyck paths, discuss some basic aspects of the representation theory of general linear Lie algebras and superalgebras, and recall the statement of the BGG correspondence. In Section~\ref{sec:strands=glmn} we prove that the linear strands of $\GL$-equivariant $S$-modules that admit a decomposition analogous to (\ref{eq:Cauchy}) have the structure of $\gl(m|n)$-modules. In Section~\ref{sec:conjecture} we present a conjectural description of the syzygies of the ideals $I_{\ll}$, and in Section~\ref{sec:evidence} we offer some supporting evidence for our conjecture.

\section{Preliminaries}\label{sec:prelim}

\subsection{Partitions and Dyck paths}\label{subsec:parts+Dyck}

We write $\bb{N}^n_{\dom}$ for the set of \defi{partitions} with at most $n$ parts (or \defi{dominant weights} with non-negative integer entries). An element $\ll\in\bb{N}^n_{\dom}$ is an $n$-tuple $\ll=(\ll_1\geq\ll_2\geq\cdots\ll_n\geq 0)$. We often omit trailing zeros, for instance when we write $(4,2,2,1)$ for the partition $(4,2,2,1,0,0,0)\in\bb{N}^7_{\dom}$. We will always identify a partition with its associated \defi{Young diagram} as follows. Consider the $2$-dimensional grid induced by the inclusion of $\bb{Z}^2 \subset \bb{R}^2$, and index each box in the grid by the coordinates $(x,y)$ of its upper right corner. We identify every partition $\ll\in\bb{N}^n_{\dom}$ with the collection of boxes 
\begin{equation}\label{eq:def-ll}
\ll = \{(i,j):1\leq i\leq n, 1\leq j\leq\ll_i\}.
\end{equation}
A \defi{corner} of the partition $\ll$ is a box $(\ll_p,p)$ where $\ll_p>\ll_{p+1}$. For example, the partition $\ll=(4,2,2,1)$ has corners $(4,1)$, $(2,3)$ and $(1,4)$ and is pictured as follows:
\begin{center}
\begin{tikzpicture}[x=\unitsize,y=\unitsize,baseline=0]
\tikzset{vertex/.style={}}%
\tikzset{edge/.style={very thick}}%
\draw[dotted] (0,0) -- (14,0);
\draw[dotted] (0,2) -- (14,2);
\draw[dotted] (0,4) -- (14,4);
\draw[dotted] (0,6) -- (14,6);
\draw[dotted] (0,8) -- (14,8);
\draw[dotted] (0,10) -- (14,10);
\draw[dotted] (2,-2) -- (2,12);
\draw[dotted] (4,-2) -- (4,12);
\draw[dotted] (6,-2) -- (6,12);
\draw[dotted] (8,-2) -- (8,12);
\draw[dotted] (10,-2) -- (10,12);
\draw[dotted] (12,-2) -- (12,12);
\draw[edge] (2,0) -- (10,0);
\draw[edge] (2,2) -- (10,2);
\draw[edge] (2,4) -- (6,4);
\draw[edge] (2,6) -- (6,6);
\draw[edge] (2,8) -- (4,8);
\draw[edge] (2,0) -- (2,8);
\draw[edge] (4,0) -- (4,8);
\draw[edge] (6,0) -- (6,6);
\draw[edge] (8,0) -- (8,2);
\draw[edge] (10,0) -- (10,2);
\end{tikzpicture}%
\end{center}
For partitions with repeated entries, we abbreviate a block consisting of $a$ parts of size $b$ as $(b^a)$: as an example, we write $(3,3,2,2,2,2,1)=(3^2,2^4,1)$. We will also sometimes write $a\times b$ for the partition $(b^a)$, whose associated Young diagram is a rectangle with side lengths $a$ and $b$.

A \defi{path} $P$ is a collection of boxes
\begin{equation}\label{def:path}
P = \{(x_1,y_1),(x_2,y_2),\cdots,(x_k,y_k)\}
\end{equation}
satisfying the condition that for each $i=1,\cdots,k-1$ we have that either 
\begin{equation}\label{def:steps}
(x_{i+1},y_{i+1}) = (x_i+1,y_i)\mbox{ or }(x_{i+1},y_{i+1}) = (x_i,y_i-1).
\end{equation}
The \defi{length} of the path $P$ is the number of boxes it contains, namely $k$, and is denoted by $|P|$. A \defi{corner} of $P$ is a box $(x_i,y_i)$ with $1<i<k$ and $x_{i+1}-x_{i-1}=1=y_{i+1}-y_{i-1}$. It is an \defi{inner corner} if $x_{i-1}=x_i$, and an \defi{outer corner} if $y_{i-1}=y_i$.  We say that the path $P$ is a \defi{Dyck path of level $d$} if in addition it satisfies 
\begin{itemize}
 \item $x_1+y_1 = x_k+y_k = d$.
 \item $x_i+y_i\geq d$ for every $i=1,\cdots,k$.
\end{itemize}
Note that the conditions $x_1+y_1 = x_k+y_k$ and (\ref{def:steps}) force $k$ to be odd, so the length of a Dyck path is always odd. We illustrate a path $P$ by drawing a broken line segment joining the centers of the squares it contains:

\begin{minipage}{.5\textwidth}
\centering
\begin{tikzpicture}[x=\unitsize,y=\unitsize,baseline=0]
\tikzset{vertex/.style={}}%
\tikzset{edge/.style={very thick}}%
\draw[dotted] (0,0) -- (20,0);
\draw[dotted] (0,2) -- (20,2);
\draw[dotted] (0,4) -- (20,4);
\draw[dotted] (0,6) -- (20,6);
\draw[dotted] (0,8) -- (20,8);
\draw[dotted] (0,10) -- (20,10);
\draw[dotted] (0,12) -- (20,12);
\draw[dotted] (2,-2) -- (2,14);
\draw[dotted] (4,-2) -- (4,14);
\draw[dotted] (6,-2) -- (6,14);
\draw[dotted] (8,-2) -- (8,14);
\draw[dotted] (10,-2) -- (10,14);
\draw[dotted] (12,-2) -- (12,14);
\draw[dotted] (14,-2) -- (14,14);
\draw[dotted] (16,-2) -- (16,14);
\draw[dotted] (18,-2) -- (18,14);
\draw[dotted] (3,13) -- (17,-1);
\draw[red, line width=5pt] (5,11) -- (9,11) -- (9,9) -- (11,9) -- (11,5) ;
\end{tikzpicture}
\captionsetup{labelformat=empty,justification=centering}

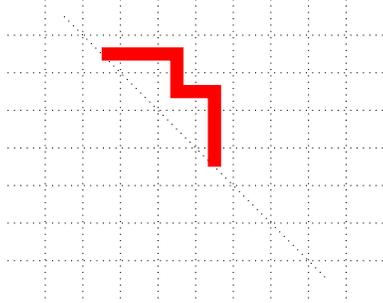
\captionof{figure}{A Dyck path of length $7$, with one inner and two outer corners}
\end{minipage}
\begin{minipage}{.5\textwidth}
\centering
\begin{tikzpicture}[x=\unitsize,y=\unitsize,baseline=0]
\tikzset{vertex/.style={}}%
\tikzset{edge/.style={very thick}}%
\draw[dotted] (0,0) -- (20,0);
\draw[dotted] (0,2) -- (20,2);
\draw[dotted] (0,4) -- (20,4);
\draw[dotted] (0,6) -- (20,6);
\draw[dotted] (0,8) -- (20,8);
\draw[dotted] (0,10) -- (20,10);
\draw[dotted] (0,12) -- (20,12);
\draw[dotted] (2,-2) -- (2,14);
\draw[dotted] (4,-2) -- (4,14);
\draw[dotted] (6,-2) -- (6,14);
\draw[dotted] (8,-2) -- (8,14);
\draw[dotted] (10,-2) -- (10,14);
\draw[dotted] (12,-2) -- (12,14);
\draw[dotted] (14,-2) -- (14,14);
\draw[dotted] (16,-2) -- (16,14);
\draw[dotted] (18,-2) -- (18,14);
\draw[dotted] (3,13) -- (17,-1);
\draw[red, line width=5pt] (5,11) -- (9,11) -- (9,9) -- (11,9) -- (11,3) -- (15,3) -- (15,1);
\end{tikzpicture}
\captionsetup{labelformat=empty,justification=centering}

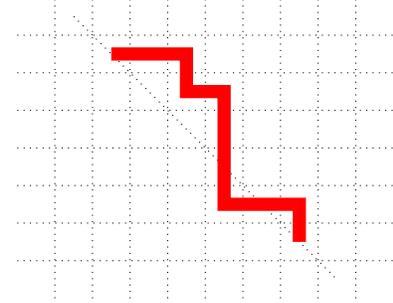
\captionof{figure}{A non-Dyck path of length $11$, with two inner and three outer corners}
\end{minipage}

An \defi{augmented Dyck path} is a pair $\tl{P}=(P,B)$ where $P$ is a Dyck path and $B$ is a set of boxes, called the \defi{bullets} in $\tl{P}$, which can be partitioned as $B = B_{head} \sqcup B_{tail}$, where (if $P$ is as in (\ref{def:path}) then)
\begin{equation}\label{eq:head-tail}
\begin{aligned}
B_{head}= \{(x_1-u,y_1), (x_1-u+1,y_1),\cdots,(x_1-1,y_1)\} &\mbox{ for some }u\geq 0,\mbox{ and} \\
B_{tail} = \{(x_k,y_k-1), (x_k,y_k-2), \cdots,(x_k,y_k-v)\} &\mbox{ for some }v\geq 0. \\
\end{aligned}
\end{equation}
The \defi{length} of $\tl{P}$ is $|\tl{P}| = |P|+u+v$, and may be an even number! To illustrate the augmented Dyck path $\tl{P}$ we draw $P$ as before, and draw small disks in the center of each of the additional $u+v$ boxes from $B$:
\begin{center}
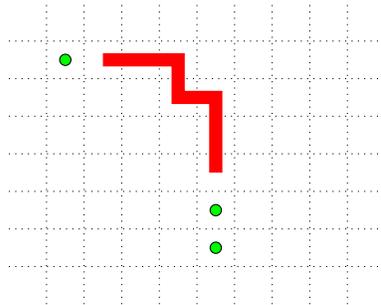

\begin{tikzpicture}[x=\unitsize,y=\unitsize,baseline=0]
\tikzset{vertex/.style={}}%
\tikzset{edge/.style={very thick}}%
\draw[dotted] (0,0) -- (20,0);
\draw[dotted] (0,2) -- (20,2);
\draw[dotted] (0,4) -- (20,4);
\draw[dotted] (0,6) -- (20,6);
\draw[dotted] (0,8) -- (20,8);
\draw[dotted] (0,10) -- (20,10);
\draw[dotted] (0,12) -- (20,12);
\draw[dotted] (2,-2) -- (2,14);
\draw[dotted] (4,-2) -- (4,14);
\draw[dotted] (6,-2) -- (6,14);
\draw[dotted] (8,-2) -- (8,14);
\draw[dotted] (10,-2) -- (10,14);
\draw[dotted] (12,-2) -- (12,14);
\draw[dotted] (14,-2) -- (14,14);
\draw[dotted] (16,-2) -- (16,14);
\draw[dotted] (18,-2) -- (18,14);
\draw[red, line width=5pt] (5,11) -- (9,11) -- (9,9) -- (11,9) -- (11,5) ;
\draw[fill=green] (11,3) circle [radius=0.3] ;
\draw[fill=green] (11,1) circle [radius=0.3] ;
\draw[fill=green] (3,11) circle [radius=0.3] ;
\end{tikzpicture}
\captionsetup{labelformat=empty}
\captionof{figure}{An augmented Dyck path of length $10$}
\end{center}

An \defi{(augmented) Dyck pattern} is a collection $\bb{D} = (D_1,D_2,\cdots,D_r;\bb{B})$ where 
\begin{itemize}
 \item each $D_i$ is a Dyck path and $\bb{B}$ is a finite set of boxes;
 \item the sets $D_1,D_2,\cdots,D_r$ and $\bb{B}$ are pairwise disjoint;
 \item $\bb{B}$ can be expressed as a union
 \begin{equation}\label{eq:bbB=union}
 \bb{B} = B_1 \cup B_2 \cup \cdots \cup B_r
 \end{equation}
 in such a way that $(D_i,B_i)$ is an augmented Dyck path for every $i=1,\cdots,r$.
\end{itemize}

\noindent Notice that we are not requiring the sets $B_i$ in (\ref{eq:bbB=union}) to be disjoint, and in particular we are not asking for the expression (\ref{eq:bbB=union}) to be unique. We write $\bb{D}=(D_1,\cdots,D_r)$ when $\bb{B}=\emptyset$. We define the \defi{support} of $\bb{D}$ by
\begin{equation}\label{eq:def-supp}
 \supp(\bb{D}) = D_1 \cup D_2 \cup \cdots \cup D_r \cup \bb{B}.
\end{equation}

If $\ll$ is a partition and $\bb{D}$ is a Dyck pattern with $\supp(\bb{D})$ disjoint from $\ll$ (when we think of $\ll$ as in (\ref{eq:def-ll})) then we let
\begin{equation}\label{eq:def-ll(D)}
 \ll(\bb{D}) = \ll \cup \supp(\bb{D}).
\end{equation}
We say that the Dyck pattern $\bb{D}$ is \defi{$\ll$-admissible} if the following conditions are satisfied:
\begin{enumerate}
\item $\ll$ is disjoint from $\supp(\bb{D})$;
\item $\ll(\bb{D})$ is (the set of boxes corresponding via (\ref{eq:def-ll}) to) a partition;
\item\label{item:cond3} For every $i\neq j$, if there exists a box $(x',y')\in D_j$ which is located directly N, E, or NE from a box $(x,y)\in D_i$ (i.e. if $(x',y')$ is one of $(x,y+1)$, $(x+1,y)$, resp. $(x+1,y+1)$), then every box located directly N, E, or NE from a box of $D_i$ must belong to $D_i$ or $D_j$.
\item\label{item:cond4} There is no bullet in $\bb{B}$ which is located directly N, E, or NE from a box in any $D_i$.
\end{enumerate}

We note that condition~(\ref{item:cond3}) above corresponds to Rule II in \cite[Section~3.1]{zinn-justin}. Below are four examples of $\ll$-admissible Dyck patterns for $\ll=(4,2,2,1)$

\begin{minipage}{.24\textwidth}
\centering
\begin{tikzpicture}[x=\unitsize,y=\unitsize,baseline=0]
\tikzset{vertex/.style={}}%
\tikzset{edge/.style={very thick}}%
\draw[dotted] (0,0) -- (14,0);
\draw[dotted] (0,2) -- (14,2);
\draw[dotted] (0,4) -- (14,4);
\draw[dotted] (0,6) -- (14,6);
\draw[dotted] (0,8) -- (14,8);
\draw[dotted] (0,10) -- (14,10);
\draw[dotted] (2,-2) -- (2,12);
\draw[dotted] (4,-2) -- (4,12);
\draw[dotted] (6,-2) -- (6,12);
\draw[dotted] (8,-2) -- (8,12);
\draw[dotted] (10,-2) -- (10,12);
\draw[dotted] (12,-2) -- (12,12);
\draw[edge] (2,0) -- (10,0);
\draw[edge] (2,2) -- (10,2);
\draw[edge] (2,4) -- (6,4);
\draw[edge] (2,6) -- (6,6);
\draw[edge] (2,8) -- (4,8);
\draw[edge] (2,0) -- (2,8);
\draw[edge] (4,0) -- (4,8);
\draw[edge] (6,0) -- (6,6);
\draw[edge] (8,0) -- (8,2);
\draw[edge] (10,0) -- (10,2);
\draw[red, line width=5pt] (5,7) -- (7,7) -- (7,5) ;
\draw[fill=green] (7,3) circle [radius=0.3] ;
\draw[red, line width=5pt] (5,9) -- (9,9) -- (9,5) ; 
\draw[fill=green] (9,3) circle [radius=0.3] ; 
\draw[fill=green] (3,9) circle [radius=0.3] ;
\end{tikzpicture}%
\end{minipage}
\begin{minipage}{.24\textwidth}
\centering
\begin{tikzpicture}[x=\unitsize,y=\unitsize,baseline=0]
\tikzset{vertex/.style={}}%
\tikzset{edge/.style={very thick}}%
\draw[dotted] (0,0) -- (14,0);
\draw[dotted] (0,2) -- (14,2);
\draw[dotted] (0,4) -- (14,4);
\draw[dotted] (0,6) -- (14,6);
\draw[dotted] (0,8) -- (14,8);
\draw[dotted] (0,10) -- (14,10);
\draw[dotted] (2,-2) -- (2,12);
\draw[dotted] (4,-2) -- (4,12);
\draw[dotted] (6,-2) -- (6,12);
\draw[dotted] (8,-2) -- (8,12);
\draw[dotted] (10,-2) -- (10,12);
\draw[dotted] (12,-2) -- (12,12);
\draw[edge] (2,0) -- (10,0);
\draw[edge] (2,2) -- (10,2);
\draw[edge] (2,4) -- (6,4);
\draw[edge] (2,6) -- (6,6);
\draw[edge] (2,8) -- (4,8);
\draw[edge] (2,0) -- (2,8);
\draw[edge] (4,0) -- (4,8);
\draw[edge] (6,0) -- (6,6);
\draw[edge] (8,0) -- (8,2);
\draw[edge] (10,0) -- (10,2);
\draw[red, line width=5pt] (5,7) -- (7,7) -- (7,5) ;
\draw[fill=green] (7,3) circle [radius=0.3] ;
\draw[red, line width=5pt] (3,9) -- (9,9) -- (9,3) ;
\end{tikzpicture}%
\end{minipage}
\begin{minipage}{.24\textwidth}
\centering
\begin{tikzpicture}[x=\unitsize,y=\unitsize,baseline=0]
\tikzset{vertex/.style={}}%
\tikzset{edge/.style={very thick}}%
\draw[dotted] (0,0) -- (14,0);
\draw[dotted] (0,2) -- (14,2);
\draw[dotted] (0,4) -- (14,4);
\draw[dotted] (0,6) -- (14,6);
\draw[dotted] (0,8) -- (14,8);
\draw[dotted] (0,10) -- (14,10);
\draw[dotted] (2,-2) -- (2,12);
\draw[dotted] (4,-2) -- (4,12);
\draw[dotted] (6,-2) -- (6,12);
\draw[dotted] (8,-2) -- (8,12);
\draw[dotted] (10,-2) -- (10,12);
\draw[dotted] (12,-2) -- (12,12);
\draw[edge] (2,0) -- (10,0);
\draw[edge] (2,2) -- (10,2);
\draw[edge] (2,4) -- (6,4);
\draw[edge] (2,6) -- (6,6);
\draw[edge] (2,8) -- (4,8);
\draw[edge] (2,0) -- (2,8);
\draw[edge] (4,0) -- (4,8);
\draw[edge] (6,0) -- (6,6);
\draw[edge] (8,0) -- (8,2);
\draw[edge] (10,0) -- (10,2);
\draw[red, line width=5pt] (5,7) -- (7,7) -- (7,5) ;
\draw[fill=green] (7,3) circle [radius=0.3] ;
\draw[red, line width=5pt] (3,9) -- (9,9) -- (9,3) -- (11,3) -- (11,1);
\end{tikzpicture}%
\end{minipage}
\begin{minipage}{.24\textwidth}
\centering
\begin{tikzpicture}[x=\unitsize,y=\unitsize,baseline=0]
\tikzset{vertex/.style={}}%
\tikzset{edge/.style={very thick}}%
\draw[dotted] (0,0) -- (14,0);
\draw[dotted] (0,2) -- (14,2);
\draw[dotted] (0,4) -- (14,4);
\draw[dotted] (0,6) -- (14,6);
\draw[dotted] (0,8) -- (14,8);
\draw[dotted] (0,10) -- (14,10);
\draw[dotted] (2,-2) -- (2,12);
\draw[dotted] (4,-2) -- (4,12);
\draw[dotted] (6,-2) -- (6,12);
\draw[dotted] (8,-2) -- (8,12);
\draw[dotted] (10,-2) -- (10,12);
\draw[dotted] (12,-2) -- (12,12);
\draw[edge] (2,0) -- (10,0);
\draw[edge] (2,2) -- (10,2);
\draw[edge] (2,4) -- (6,4);
\draw[edge] (2,6) -- (6,6);
\draw[edge] (2,8) -- (4,8);
\draw[edge] (2,0) -- (2,8);
\draw[edge] (4,0) -- (4,8);
\draw[edge] (6,0) -- (6,6);
\draw[edge] (8,0) -- (8,2);
\draw[edge] (10,0) -- (10,2);
\draw[red, line width=5pt] (7,5) -- (9,5) -- (9,3) ;
\draw[red, line width=5pt] (3,9) -- (5,9) -- (5,7) ;
\draw[red, line width=5pt] (6.6,3) -- (7.4,3) ;
\draw[red, line width=5pt] (10.6,1) -- (11.4,1) ;
\end{tikzpicture}%
\end{minipage}
and three examples of Dyck patterns that are not $\ll$-admissible

\begin{minipage}{.33\textwidth}
\centering
\begin{tikzpicture}[x=\unitsize,y=\unitsize,baseline=0]
\tikzset{vertex/.style={}}%
\tikzset{edge/.style={very thick}}%
\draw[dotted] (0,0) -- (14,0);
\draw[dotted] (0,2) -- (14,2);
\draw[dotted] (0,4) -- (14,4);
\draw[dotted] (0,6) -- (14,6);
\draw[dotted] (0,8) -- (14,8);
\draw[dotted] (0,10) -- (14,10);
\draw[dotted] (2,-2) -- (2,12);
\draw[dotted] (4,-2) -- (4,12);
\draw[dotted] (6,-2) -- (6,12);
\draw[dotted] (8,-2) -- (8,12);
\draw[dotted] (10,-2) -- (10,12);
\draw[dotted] (12,-2) -- (12,12);
\draw[edge] (2,0) -- (10,0);
\draw[edge] (2,2) -- (10,2);
\draw[edge] (2,4) -- (6,4);
\draw[edge] (2,6) -- (6,6);
\draw[edge] (2,8) -- (4,8);
\draw[edge] (2,0) -- (2,8);
\draw[edge] (4,0) -- (4,8);
\draw[edge] (6,0) -- (6,6);
\draw[edge] (8,0) -- (8,2);
\draw[edge] (10,0) -- (10,2);
\draw[red, line width=5pt] (5,7) -- (7,7) -- (7,5) ;
\draw[fill=green] (7,3) circle [radius=0.3] ;
\draw[red, line width=5pt] (7,9) -- (9,9) -- (9,7) ; 
\draw[fill=green] (9,3) circle [radius=0.3] ; 
\draw[fill=green] (3,9) circle [radius=0.3] ;
\draw[fill=green] (9,5) circle [radius=0.3] ; 
\draw[fill=green] (5,9) circle [radius=0.3] ;
\end{tikzpicture}%
\end{minipage}
\begin{minipage}{.33\textwidth}
\centering
\begin{tikzpicture}[x=\unitsize,y=\unitsize,baseline=0]
\tikzset{vertex/.style={}}%
\tikzset{edge/.style={very thick}}%
\draw[dotted] (0,0) -- (14,0);
\draw[dotted] (0,2) -- (14,2);
\draw[dotted] (0,4) -- (14,4);
\draw[dotted] (0,6) -- (14,6);
\draw[dotted] (0,8) -- (14,8);
\draw[dotted] (0,10) -- (14,10);
\draw[dotted] (2,-2) -- (2,12);
\draw[dotted] (4,-2) -- (4,12);
\draw[dotted] (6,-2) -- (6,12);
\draw[dotted] (8,-2) -- (8,12);
\draw[dotted] (10,-2) -- (10,12);
\draw[dotted] (12,-2) -- (12,12);
\draw[edge] (2,0) -- (10,0);
\draw[edge] (2,2) -- (10,2);
\draw[edge] (2,4) -- (6,4);
\draw[edge] (2,6) -- (6,6);
\draw[edge] (2,8) -- (4,8);
\draw[edge] (2,0) -- (2,8);
\draw[edge] (4,0) -- (4,8);
\draw[edge] (6,0) -- (6,6);
\draw[edge] (8,0) -- (8,2);
\draw[edge] (10,0) -- (10,2);
\draw[red, line width=5pt] (5,7) -- (7,7) -- (7,5) ;
\draw[fill=green] (7,3) circle [radius=0.3] ;
\draw[fill=green] (3,9) circle [radius=0.3] ;
\draw[red, line width=5pt] (5,9) -- (9,9) -- (9,5) ; 
\draw[red, line width=5pt] (9,3) -- (11,3) -- (11,1) ; 
\end{tikzpicture}%
\end{minipage}
\begin{minipage}{.33\textwidth}
\centering
\begin{tikzpicture}[x=\unitsize,y=\unitsize,baseline=0]
\tikzset{vertex/.style={}}%
\tikzset{edge/.style={very thick}}%
\draw[dotted] (0,0) -- (14,0);
\draw[dotted] (0,2) -- (14,2);
\draw[dotted] (0,4) -- (14,4);
\draw[dotted] (0,6) -- (14,6);
\draw[dotted] (0,8) -- (14,8);
\draw[dotted] (0,10) -- (14,10);
\draw[dotted] (2,-2) -- (2,12);
\draw[dotted] (4,-2) -- (4,12);
\draw[dotted] (6,-2) -- (6,12);
\draw[dotted] (8,-2) -- (8,12);
\draw[dotted] (10,-2) -- (10,12);
\draw[dotted] (12,-2) -- (12,12);
\draw[edge] (2,0) -- (10,0);
\draw[edge] (2,2) -- (10,2);
\draw[edge] (2,4) -- (6,4);
\draw[edge] (2,6) -- (6,6);
\draw[edge] (2,8) -- (4,8);
\draw[edge] (2,0) -- (2,8);
\draw[edge] (4,0) -- (4,8);
\draw[edge] (6,0) -- (6,6);
\draw[edge] (8,0) -- (8,2);
\draw[edge] (10,0) -- (10,2);
\draw[red, line width=5pt] (3,9) -- (5,9) -- (5,7) ;
\draw[red, line width=5pt] (7,7) -- (11,7) -- (11,3) ;
\draw[red, line width=5pt] (7,5) -- (9,5) -- (9,3) ;
\draw[red, line width=5pt] (6.6,3) -- (7.4,3) ;
\draw[red, line width=5pt] (10.6,1) -- (11.4,1) ;
\end{tikzpicture}%
\end{minipage}

For a fixed $\ll$, a $\ll$-admissible Dyck pattern is determined by the Dyck paths that it contains, that is the position of the bullets is determined by $\ll$ and the Dyck paths. Since
\[\ll(\bb{D}) = \ll \sqcup D_1 \sqcup \cdots \sqcup D_r \sqcup \bb{B}\]
this amounts to the fact that $\ll$ together with the Dyck paths in $D$ determine $\ll(\bb{D})$, which we prove next.

\begin{lemma}\label{lem:adm-pat-determines-bullets}
 Let $\bb{D}=(D_1,\cdots,D_r;\bb{B})$ be a $\ll$-admissible Dyck pattern and let $x,y\geq 1$ be positive integers. The following statements are equivalent:
 \begin{enumerate}
 \item $(x,y)\in\ll(\bb{D})$.
 \item $(x,y)\in\ll$ or there exists an index $1\leq i\leq r$ and a box $(x',y')\in D_i$ satisfying $x'\geq x$ and $y'\geq y$.
 \end{enumerate}
\end{lemma}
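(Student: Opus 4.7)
The plan is to prove the two implications separately, via a short case analysis based on the decomposition
\[\ll(\bb{D}) = \ll \cup D_1 \cup \cdots \cup D_r \cup \bb{B}.\]

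For the direction $(2) \Rightarrow (1)$: if $(x,y)\in\ll$ then $(x,y)\in\ll(\bb{D})$ is immediate from this decomposition. Otherwise pick the box $(x',y')\in D_i\subseteq\ll(\bb{D})$ given by the hypothesis, so $(x',y')\in\ll(\bb{D})$ with $x'\geq x$ and $y'\geq y$. By admissibility condition~(2), $\ll(\bb{D})$ is itself a partition, i.e.\ an order ideal in $\bb{Z}_{>0}^2$ under componentwise order: this follows directly from the description (\ref{eq:def-ll}) together with the fact that the parts of a partition are weakly decreasing. Since $1\leq x\leq x'$ and $1\leq y\leq y'$, we conclude $(x,y)\in\ll(\bb{D})$.

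For the direction $(1) \Rightarrow (2)$: suppose $(x,y)\in\ll(\bb{D})$. Using the decomposition above, there are three cases. If $(x,y)\in\ll$, we are done. If $(x,y)\in D_i$ for some $i$, take $(x',y')=(x,y)$, which trivially satisfies the required inequalities. Otherwise $(x,y)\in\bb{B}$, so by (\ref{eq:bbB=union}) we have $(x,y)\in B_i$ for some $i$, and then $(x,y)$ lies in either $B_{head}$ or $B_{tail}$ of the augmented Dyck path $(D_i,B_i)$. Writing $D_i=\{(x_1,y_1),\ldots,(x_k,y_k)\}$ as in (\ref{def:path}), the explicit form (\ref{eq:head-tail}) shows that in the head case $(x,y)=(x_1-\ell,y_1)$ for some $1\leq\ell\leq u$, so the starting box $(x_1,y_1)\in D_i$ satisfies $x_1>x$ and $y_1=y$ and thus witnesses~(2); the tail case is symmetric, with the ending box $(x_k,y_k)\in D_i$ playing the same role.

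The whole argument is essentially a bookkeeping exercise: the only non-vacuous step is the case of bullets, and even there the conclusion is immediate from the definition of $B_{head}$ and $B_{tail}$, since bullets are placed directly W of the first box or directly S of the last box of the associated Dyck path. No aspect of conditions (\ref{item:cond3}) or (\ref{item:cond4}) of admissibility is needed; only condition~(2) (used in the first direction) intervenes.
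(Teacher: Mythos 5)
Your proof is correct and follows the same strategy as the paper's: split along the disjoint decomposition $\ll(\bb{D}) = \ll \sqcup D_1 \sqcup \cdots \sqcup D_r \sqcup \bb{B}$, handle bullets in the $(1)\Rightarrow(2)$ direction by pointing to the endpoint $(x_1,y_1)$ or $(x_k,y_k)$ of the relevant Dyck path, and for $(2)\Rightarrow(1)$ invoke the fact that $\ll(\bb{D})$ is a partition, hence an order ideal. Your closing remark that only admissibility condition~(2) is actually used matches the paper's argument as well.
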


\begin{proof} ``(1) $\Rightarrow$ (2)": Suppose that $(x,y)\in\ll(\bb{D})\setminus\ll$. If $(x,y)\in D_i$ for some $i$ then we can take $(x',y')=(x,y)$ to get (2). If $(x,y)\in\bb{B}$ then using notation (\ref{eq:bbB=union}) we have $(x,y)\in B_i$ for some $i$, i.e. $(x,y)$ is a bullet in the augmented Dyck path $(D_i,B_i)$. Letting $(P,B)=(D_i,B_i)$ and using notation (\ref{eq:head-tail}) we get that either $(x,y)\in B_{head}$ in which case we take $(x',y')=(x_1,y_1)$, or $(x,y)\in B_{tail}$ when we take $(x',y')=(x_k,y_k)$.

``(2) $\Rightarrow$ (1)": Since $\ll(\bb{D})$ is a partition, it follows that if $(x',y')\in\ll(\bb{D})$ and $1\leq x\leq x'$, $1\leq y\leq y'$, then $(x,y)\in\ll(\bb{D})$. Since $\ll\subseteq\ll(\bb{D})$ and $D_i\subseteq\ll(\bb{D})$, the conclusion follows.
\end{proof}

We define the \defi{Dyck size} of $\bb{D}$ to be
\[d(\bb{D}) = |D_1|+|D_2|+\cdots+|D_r|\]
and the \defi{bullet size} of $\bb{D}$ to be
\[b(\bb{D}) = |\bb{B}|.\]
The \defi{(total) size} of $\bb{D}$ is $|\bb{D}| = d(\bb{D}) + b(\bb{D})$, so that $|\ll(\bb{D})| = |\ll| + |\bb{D}|$ for every $\ll$-admissible Dyck pattern $\bb{D}$.

\subsection{The general linear Lie algebra}\label{subsec:repthy}

Let $U$ be a finite dimensional complex vector space with $\dim(U)=r$, and let $\gl(U)$ the Lie algebra of endomorphisms of $U$, with the usual Lie bracket $[x,y]=xy-yx$. We write $\bb{Z}^n_{\dom}$ for the set of dominant weights $\ll\in\bb{Z}^n$ with $\ll_1\geq\ll_2\geq\cdots\geq\ll_n$, and write $\SS_{\ll}$ for the \defi{Schur functor} associated to $\ll$. We write $U^{\vee} = \Hom_{\bb{C}}(U,\bb{C})$ for the dual vector space, and $\ll^{\vee} = (-\ll_n,-\ll_{n-1},\cdots,-\ll_1)$, so that we have a natural isomorphism
\begin{equation}\label{eq:sll-dual}
\SS_{\ll}U^{\vee} \simeq \SS_{\ll^{\vee}}U.
\end{equation}
Our convention for Schur functors is so that if $\ll=(d,0^{N-1})=(d)$ for $d\geq 0$ then $\SS_{\ll}U=\Sym^d U$, and if $\ll=(1^k)$ then $\SS_{\ll}U = \bw^k U$.

There is a natural isomorphism of Lie algebras $\gl(U) \simeq \gl(U^{\vee})$ given by $\phi\mapsto -\phi^{\vee}$. A choice of basis on $U$ determines a maximal torus $\mf{t}$ of diagonal matrices inside $\gl(U)$, and a dual basis of $U^{\vee}$ with a corresponding maximal torus~$\mf{t}^{\vee}$ inside $\gl(U^{\vee})$. Because the natural identification $\gl(U) \simeq \gl(U^{\vee})$ sends $\phi\mapsto -\phi^{\vee}$, positive weights with respect to $\mf{t}$ will correspond to negative weights with respect to $\mf{t}^{\vee}$ and vice-versa (the equation (\ref{eq:sll-dual}) is an instance of this phenomenon). Based on this observation, we will choose our conventions so that we are only required to work with partitions (non-negative dominant weights) $\ll\in\bb{N}^n_{\dom}$ in the rest of the article, allowing us to take advantage of the pictorial representation described in the previous section. 

\subsection{The BGG correspondence}\label{subsec:BGG}

Throughout this article we let $V_0,V_1$ be complex vector spaces with $\dim(V_0)=m$, $\dim(V_1)=n$, and assume that $m\geq n$. We write $W_i=V_i^{\vee}$ for their vector space duals, and let $V=V_0\oo V_1$ and $W=W_0\oo W_1=V^{\vee}$. We consider the polynomial ring $S=\Sym(V)$ and the exterior algebra $E=\bw W$. Choosing dual bases on the spaces $V_i$ and $W_i$, we can identify $S = \bb{C}[x_{i,j}]$ and $E=\bb{C}\langle e_{i,j}\rangle$, where $\langle,\rangle$ indicates that the multiplication in $E$ is skew-commutative.

If $M=\bigoplus_{t\in\bb{Z}}M_t$ is a finitely generated graded $S$-module, we let $M^{\vee}$ denote its \defi{graded dual},
\[M^{\vee} = \bigoplus_{t\in\bb{Z}} \Hom_{\bb{C}}(M_t,\bb{C}) = \bigoplus_{t\in\bb{Z}}M_t^{\vee},\]
where the action of $S$ is given by $(s\cdot\phi)(m) = \phi(s\cdot m)$ for $s\in S$, $\phi\in M^{\vee}$ and $m\in M$ homogeneous elements. We associate to $M$ a complex $\tl{\bf R}(M)$ of free $E$-modules (which is a modification of the complex ${\bf R}(M)$ in \cite[Section~7E]{eisenbud-syzygies}):
\[\tl{\bf R}(M):\quad \cdots \lra E\oo M_t^{\vee} \lra E\oo M_{t-1}^{\vee}\lra \cdots \]
\[ e\oo\phi \lra \sum_{i,j} e\cdot e_{i,j} \oo x_{i,j}\cdot\phi\]
We make the convention that $E_s = \bw^s W$ lies in degree $s$, that is we grade $E$ positively with respect to the ``$W$-variables" $e_{i,j}$, or more formally we take the grading induced by the action of the $1$-dimensional torus spanned by $(\id_{W_0},\id_{W_1})$ inside $\gl(W_0)\oplus\gl(W_1)$. This is different from \cite[Section~7B]{eisenbud-syzygies} where the $W$-variables are given negative degrees, since the grading is relative to the action of the $1$-dimensional torus spanned by $(\id_{V_0},\id_{V_1})$ inside $\gl(V_0)\oplus\gl(V_1)$. With this convention we give $M^{\vee}_t$ degree $t$ (with respect to the ``$W$-variables") and the analogue of \cite[Proposition~7.21]{eisenbud-syzygies} yields
\[H_t(\tl{\bf R}(M))_{s+t} \simeq \Tor_s(\bb{C},M)^{\vee}_{s+t}.\]
The $E$-module $H_t(\tl{\bf R}(M))$ is finitely generated (and in particular a finite dimensional vector space) and it encodes (up to taking vector space duals) the $t$-th linear strand of the minimal free resolution of $M$. Furthermore, if $M$ is a $(\gl(V_0)\oplus\gl(V_1))$-equivariant $S$-module then each $H_t(\tl{\bf R}(M))$ is a $(\gl(W_0)\oplus\gl(W_1))$-equivariant $E$-module. With some more assumptions on $M$, we will see in Section~\ref{sec:strands=glmn} that $H_t(\tl{\bf R}(M))$ is a module over the general linear Lie superalgebra discussed next.

\subsection{Representations of the general linear Lie superalgebra}\label{subsec:glmn}


We let $\g=\gl(m|n)$ denote the general linear Lie superalgebra of endomorphisms of the $\zt$-graded vector space $W_0\oplus V_1$, where $W_0\simeq\bb{C}^m$ is in degree $0$, and $V_1\simeq\bb{C}^n$ is in degree $1$. As in the previous section we let $V_0=W_0^{\vee}$, $W_1=V_1^{\vee}$. We consider the $\bb{Z}$-grading on $\g$ given by
\[\g_{0} = \gl(V_0)\oplus\gl(W_1)\simeq \gl(W_0)\oplus\gl(W_1),\]
\[ \g_{-1} = \Hom_{\bb{C}}(V_0,W_1)\simeq W_0\oo W_1,\mbox{ and }\g_{1} = \Hom_{\bb{C}}(W_1,V_0)\simeq V_0\oo V_1,\]
and the Lie superbracket $[x,y] = xy- (-1)^{\deg(x)\cdot\deg(y)}yx$ for $x,y$ homogeneous elements of $\g$. Note that the superbracket restricts to a usual Lie bracket on $\g_0$, which itself is a reductive Lie algebra. We define
\[\p = \g_{0} \oplus \g_1\]
which is a subalgebra of $\g$, and observe that every $\g_0$-module $M$ can be thought of as a $\p$-module by making the action of $\g_1$ on $M$ be trivial. For every partition $\ll\in\bb{N}^n_{\dom}$ we can then take the irreducible $\g_0$-module $\SS_{\ll}W_0\oo\SS_{\ll}W_1$, think of it as a $\p$-module, and define the induced representation
\begin{equation}\label{eq:def-Kac-module}
 \K_{\ll} = \Ind_{\p}^{\g}(\SS_{\ll}W_0\oo\SS_{\ll}W_1),
\end{equation} 
which we call the \defi{Kac module of weight $\ll$}. We note that in the general theory of representations of $\gl(m|n)$ one considers more general Kac modules by inducing $\SS_{\ll}W_0\oo\SS_{\mu}W_1$ for an arbitrary pair of partitions (or more generally, dominant weights) $(\ll,\mu)$. The special case of Kac modules that we consider in (\ref{eq:def-Kac-module}) are the ones of so called \defi{maximal degree of atypicality}, and in a sense are the most interesting of the Kac modules. They lie at the opposite end of the spectrum from the \defi{typical Kac modules} (those whose degree of atypicality is $0$), which are known to be irreducible as $\g$-modules. By contrast, the modules $\K_{\ll}$ in (\ref{eq:def-Kac-module}) have a very interesting $\g$-module structure which will be discussed next.

To motivate our interest in Kac modules, we note that $\g_{-1}=W_0\oo W_1$ is an abelian Lie superalgebra concentrated in odd degree, so its universal enveloping algebra is simply $\mc{U}(\g_{-1})=\bigwedge\g_{-1}$, the exterior algebra which was denoted by $E$ in Section~\ref{subsec:BGG}. It follows that $\mc{U}(\g)$ contains $E$ as a subring, and therefore every $\g$-module inherits the structure of an $E$-module. Moreover, since $\g_0\subset\g$ is a subalgebra, any such module is also $\g_0$-equivariant. The Kac modules are in fact free as $E$-modules,
\[\K_{\ll} = E\oo(\SS_{\ll}W_0\oo\SS_{\ll}W_1),\]
and their $\g_0$-module structure can be obtained based on the Cauchy decomposition of exterior powers of a tensor product, combined with the Littlewood--Richardson rule. As a $\g$-module, $\K_{\ll}$ has a unique simple quotient, which is denote by $\LL_{\ll}$ -- it is the \defi{simple $\g$-module of weight $\ll$}. $\K_{\ll}$ is not semi-simple as a $\g$-module, but it has finite length with composition factors described as follows. We let
\begin{equation}\label{eq:K-ll-n}
\mc{K}(\ll;n) = \{ \bb{D}=(D_1,\cdots,D_r)\mbox{ a }\ll\mbox{-admissible Dyck pattern and }\ll(\bb{D})_j = 0\mbox{ for }j>n\}
\end{equation}
and stress the fact that the patterns in $\mc{K}(\ll;n)$ are not augmented, i.e. they contain no bullets, but may contain Dyck paths of length one. The composition factors of the Kac modules are encoded by parabolic versions of Kazhdan-Lusztig polynomials \cites{brundan,serganova}. Using the Dyck pattern interpretation of the parabolic Kazhdan--Lusztig polynomials based on Rule~II in \cite[Section~3.1]{zinn-justin} we get the following.

\begin{theorem}\label{thm:compos-Kac}
If we let $[M]$ denote the class of a $\g$-module $M$ in the Grothendieck group $K_0(\g)$ of finite dimensional representations of $\g$, then
\[[\K_{\ll}] = \sum_{\bb{D}\in\mc{K}(\ll;n)} [\LL_{\ll(\bb{D})}].\]
\end{theorem}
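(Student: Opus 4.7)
The plan is to deduce this statement from the known description of composition factors of Kac modules in terms of parabolic Kazhdan--Lusztig polynomials, and then translate the combinatorics into the Dyck pattern language of this paper.

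First I would recall the Brundan--Serganova theory. For the general linear Lie superalgebra $\gl(m|n)$, the classes $[\K_\mu]$ and $[\LL_\mu]$ form two natural bases of the Grothendieck group of the block containing $\K_\ll$ (restricted to the block of maximal atypicality, where $\mu\in\bb{N}^n_{\dom}$). The main theorem of \cite{brundan}, refined in \cite{serganova}, expresses the transition matrix between these two bases in terms of parabolic Kazhdan--Lusztig polynomials for a suitable symmetric group, evaluated at $q=1$: one has
\[
[\K_\ll] = \sum_\mu p_{\ll,\mu}(1)\,[\LL_\mu],
\]
where $p_{\ll,\mu}(q)$ are the parabolic KL polynomials of type~A associated to the weight diagrams of $\ll$ and $\mu$.

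Next I would invoke the combinatorial description of these polynomials due to Zinn-Justin \cite{zinn-justin}. In \cite[Section~3.1]{zinn-justin}, Rule~I and Rule~II describe the coefficients $p_{\ll,\mu}(1)$ as the number of collections of Dyck paths connecting the weight diagrams of $\ll$ and $\mu$, subject to a compatibility condition. In particular, $p_{\ll,\mu}(1)\in\{0,1\}$, and $p_{\ll,\mu}(1)=1$ if and only if $\mu$ is obtained from $\ll$ by a configuration of non-crossing Dyck paths satisfying the non-nesting condition of Rule~II. Thus the sum becomes
\[
[\K_\ll] = \sum_\mu \bigl(\text{number of admissible Dyck configurations from }\ll\text{ to }\mu\bigr)\cdot[\LL_\mu].
\]

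Finally I would match Zinn-Justin's combinatorics with the definition of $\ll$-admissible Dyck patterns $\mc{K}(\ll;n)$ used in this paper. The condition that $\ll(\bb{D})$ be a partition (Definition condition (2)) corresponds to the requirement that the Dyck paths turn around corners in a compatible way with the shape of $\ll$; condition (\ref{item:cond3}) is explicitly Rule~II. The bound $\ll(\bb{D})_j=0$ for $j>n$ appearing in (\ref{eq:K-ll-n}) corresponds to remaining within the block of maximal atypicality of $\gl(m|n)$, i.e.\ to the constraint that $\mu$ still has at most $n$ parts. Since a $\ll$-admissible Dyck pattern is uniquely determined by its underlying Dyck paths (Lemma~\ref{lem:adm-pat-determines-bullets}), and bullets are absent in $\mc{K}(\ll;n)$, the assignment $\bb{D}\mapsto\ll(\bb{D})$ is a bijection between $\mc{K}(\ll;n)$ and the set of $\mu$ reachable from $\ll$ by an admissible configuration of Dyck paths. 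Combining these bijections gives the desired formula.

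The main obstacle is purely bookkeeping: one has to be careful that the identification of weights of $\gl(m|n)$ with partitions in $\bb{N}^n_{\dom}$ used here, the positioning of the Young diagram in the integer grid as in (\ref{eq:def-ll}), and the reading convention for weight diagrams in \cite{brundan,zinn-justin} all align so that Rule~II is literally the condition (\ref{item:cond3}). Once this dictionary is fixed, the statement is an immediate translation of the Brundan--Serganova/Zinn-Justin formula.
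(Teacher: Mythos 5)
Your proposal follows exactly the same route the paper takes: the paper itself supplies no detailed proof of Theorem~\ref{thm:compos-Kac} but simply cites the Brundan--Serganova result expressing composition multiplicities of Kac modules via parabolic Kazhdan--Lusztig polynomials, together with the Shigechi--Zinn-Justin Dyck path interpretation of those polynomials (Rule~II of \cite[Section~3.1]{zinn-justin}), and declares the theorem to be the resulting translation. Your write-up spells out exactly this chain of citations and the dictionary between the two combinatorial languages, so it is the intended argument.

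One small internal inconsistency worth fixing: you assert $p_{\ll,\mu}(1)\in\{0,1\}$ and in the same breath say the coefficient equals the \emph{number} of admissible Dyck configurations from $\ll$ to $\mu$. For Grassmannian KL polynomials the individual $q$-coefficients are $0$ or $1$, but the polynomial can have several terms, so $p_{\ll,\mu}(1)$ can exceed $1$; the correct statement is the second one (multiplicity equals the count of admissible configurations), and indeed the theorem's sum over $\bb{D}\in\mc{K}(\ll;n)$ is set up precisely to record such multiplicities, since distinct patterns $\bb{D}$ may yield the same $\ll(\bb{D})$. The claim $p_{\ll,\mu}(1)\in\{0,1\}$ plays no role in your argument and should simply be dropped.
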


\begin{example}\label{ex:compos-Kac} Take $n=3$ and consider $\ll=(3,2)$. The Kac module $\K_{\ll}$ has $10$ simple composition factors, classified by the Dyck patterns $\bb{D}$ pictured below (and labelled by $\ll(\bb{D})$)

\begin{minipage}{.18\textwidth}
\centering
\begin{tikzpicture}[x=\unitsize,y=\unitsize,baseline=0]
\tikzset{vertex/.style={}}%
\tikzset{edge/.style={very thick}}%
\draw[dotted] (0,0) -- (10,0);
\draw[dotted] (0,2) -- (10,2);
\draw[dotted] (0,4) -- (10,4);
\draw[dotted] (0,6) -- (10,6);
\draw[dotted] (2,-2) -- (2,8);
\draw[dotted] (4,-2) -- (4,8);
\draw[dotted] (6,-2) -- (6,8);
\draw[dotted] (8,-2) -- (8,8);
\draw[edge] (2,0) -- (8,0);
\draw[edge] (2,2) -- (8,2);
\draw[edge] (2,4) -- (6,4);
\draw[edge] (2,0) -- (2,4);
\draw[edge] (4,0) -- (4,4);
\draw[edge] (6,0) -- (6,4);
\draw[edge] (8,0) -- (8,2);
\end{tikzpicture}%
\captionsetup{labelformat=empty}
\captionof{figure}{$(3,2)$}
\end{minipage}
\begin{minipage}{.18\textwidth}
\centering
\begin{tikzpicture}[x=\unitsize,y=\unitsize,baseline=0]
\tikzset{vertex/.style={}}%
\tikzset{edge/.style={very thick}}%
\draw[dotted] (0,0) -- (12,0);
\draw[dotted] (0,2) -- (12,2);
\draw[dotted] (0,4) -- (12,4);
\draw[dotted] (0,6) -- (12,6);
\draw[dotted] (2,-2) -- (2,8);
\draw[dotted] (4,-2) -- (4,8);
\draw[dotted] (6,-2) -- (6,8);
\draw[dotted] (8,-2) -- (8,8);
\draw[dotted] (10,-2) -- (10,8);
\draw[edge] (2,0) -- (8,0);
\draw[edge] (2,2) -- (8,2);
\draw[edge] (2,4) -- (6,4);
\draw[edge] (2,0) -- (2,4);
\draw[edge] (4,0) -- (4,4);
\draw[edge] (6,0) -- (6,4);
\draw[edge] (8,0) -- (8,2);
\draw[red, line width=5pt] (8.6,1) -- (9.4,1) ;
\end{tikzpicture}%
\captionsetup{labelformat=empty}
\captionof{figure}{$(4,2)$}
\end{minipage}
\begin{minipage}{.18\textwidth}
\centering
\begin{tikzpicture}[x=\unitsize,y=\unitsize,baseline=0]
\tikzset{vertex/.style={}}%
\tikzset{edge/.style={very thick}}%
\draw[dotted] (0,0) -- (10,0);
\draw[dotted] (0,2) -- (10,2);
\draw[dotted] (0,4) -- (10,4);
\draw[dotted] (0,6) -- (10,6);
\draw[dotted] (2,-2) -- (2,8);
\draw[dotted] (4,-2) -- (4,8);
\draw[dotted] (6,-2) -- (6,8);
\draw[dotted] (8,-2) -- (8,8);
\draw[edge] (2,0) -- (8,0);
\draw[edge] (2,2) -- (8,2);
\draw[edge] (2,4) -- (6,4);
\draw[edge] (2,0) -- (2,4);
\draw[edge] (4,0) -- (4,4);
\draw[edge] (6,0) -- (6,4);
\draw[edge] (8,0) -- (8,2);
\draw[red, line width=5pt] (6.6,3) -- (7.4,3) ;
\end{tikzpicture}%
\captionsetup{labelformat=empty}
\captionof{figure}{$(3,3)$}
\end{minipage}
\begin{minipage}{.2\textwidth}
\centering
\begin{tikzpicture}[x=\unitsize,y=\unitsize,baseline=0]
\tikzset{vertex/.style={}}%
\tikzset{edge/.style={very thick}}%
\draw[dotted] (0,0) -- (10,0);
\draw[dotted] (0,2) -- (10,2);
\draw[dotted] (0,4) -- (10,4);
\draw[dotted] (0,6) -- (10,6);
\draw[dotted] (2,-2) -- (2,8);
\draw[dotted] (4,-2) -- (4,8);
\draw[dotted] (6,-2) -- (6,8);
\draw[dotted] (8,-2) -- (8,8);
\draw[edge] (2,0) -- (8,0);
\draw[edge] (2,2) -- (8,2);
\draw[edge] (2,4) -- (6,4);
\draw[edge] (2,0) -- (2,4);
\draw[edge] (4,0) -- (4,4);
\draw[edge] (6,0) -- (6,4);
\draw[edge] (8,0) -- (8,2);
\draw[red, line width=5pt] (2.6,5) -- (3.4,5) ;
\end{tikzpicture}%
\captionsetup{labelformat=empty}
\captionof{figure}{$(3,2,1)$}
\end{minipage}
\begin{minipage}{.2\textwidth}
\centering
\begin{tikzpicture}[x=\unitsize,y=\unitsize,baseline=0]
\tikzset{vertex/.style={}}%
\tikzset{edge/.style={very thick}}%
\draw[dotted] (0,0) -- (12,0);
\draw[dotted] (0,2) -- (12,2);
\draw[dotted] (0,4) -- (12,4);
\draw[dotted] (0,6) -- (12,6);
\draw[dotted] (2,-2) -- (2,8);
\draw[dotted] (4,-2) -- (4,8);
\draw[dotted] (6,-2) -- (6,8);
\draw[dotted] (8,-2) -- (8,8);
\draw[dotted] (10,-2) -- (10,8);
\draw[edge] (2,0) -- (8,0);
\draw[edge] (2,2) -- (8,2);
\draw[edge] (2,4) -- (6,4);
\draw[edge] (2,0) -- (2,4);
\draw[edge] (4,0) -- (4,4);
\draw[edge] (6,0) -- (6,4);
\draw[edge] (8,0) -- (8,2);
\draw[red, line width=5pt] (6.6,3) -- (7.4,3) ;
\draw[red, line width=5pt] (8.6,1) -- (9.4,1) ;
\end{tikzpicture}%
\captionsetup{labelformat=empty}
\captionof{figure}{$(4,3)$}
\end{minipage}

\begin{minipage}{.18\textwidth}
\centering
\begin{tikzpicture}[x=\unitsize,y=\unitsize,baseline=0]
\tikzset{vertex/.style={}}%
\tikzset{edge/.style={very thick}}%
\draw[dotted] (0,0) -- (10,0);
\draw[dotted] (0,2) -- (10,2);
\draw[dotted] (0,4) -- (10,4);
\draw[dotted] (0,6) -- (10,6);
\draw[dotted] (2,-2) -- (2,8);
\draw[dotted] (4,-2) -- (4,8);
\draw[dotted] (6,-2) -- (6,8);
\draw[dotted] (8,-2) -- (8,8);
\draw[edge] (2,0) -- (8,0);
\draw[edge] (2,2) -- (8,2);
\draw[edge] (2,4) -- (6,4);
\draw[edge] (2,0) -- (2,4);
\draw[edge] (4,0) -- (4,4);
\draw[edge] (6,0) -- (6,4);
\draw[edge] (8,0) -- (8,2);
\draw[red, line width=5pt] (2.6,5) -- (3.4,5) ;
\draw[red, line width=5pt] (6.6,3) -- (7.4,3) ;
\end{tikzpicture}%
\captionsetup{labelformat=empty}
\captionof{figure}{$(3,3,1)$}
\end{minipage}
\begin{minipage}{.18\textwidth}
\centering
\begin{tikzpicture}[x=\unitsize,y=\unitsize,baseline=0]
\tikzset{vertex/.style={}}%
\tikzset{edge/.style={very thick}}%
\draw[dotted] (0,0) -- (12,0);
\draw[dotted] (0,2) -- (12,2);
\draw[dotted] (0,4) -- (12,4);
\draw[dotted] (0,6) -- (12,6);
\draw[dotted] (2,-2) -- (2,8);
\draw[dotted] (4,-2) -- (4,8);
\draw[dotted] (6,-2) -- (6,8);
\draw[dotted] (8,-2) -- (8,8);
\draw[dotted] (10,-2) -- (10,8);
\draw[edge] (2,0) -- (8,0);
\draw[edge] (2,2) -- (8,2);
\draw[edge] (2,4) -- (6,4);
\draw[edge] (2,0) -- (2,4);
\draw[edge] (4,0) -- (4,4);
\draw[edge] (6,0) -- (6,4);
\draw[edge] (8,0) -- (8,2);
\draw[red, line width=5pt] (2.6,5) -- (3.4,5) ;
\draw[red, line width=5pt] (8.6,1) -- (9.4,1) ;
\end{tikzpicture}%
\captionsetup{labelformat=empty}
\captionof{figure}{$(4,2,1)$}
\end{minipage}
\begin{minipage}{.18\textwidth}
\centering
\begin{tikzpicture}[x=\unitsize,y=\unitsize,baseline=0]
\tikzset{vertex/.style={}}%
\tikzset{edge/.style={very thick}}%
\draw[dotted] (0,0) -- (12,0);
\draw[dotted] (0,2) -- (12,2);
\draw[dotted] (0,4) -- (12,4);
\draw[dotted] (0,6) -- (12,6);
\draw[dotted] (2,-2) -- (2,8);
\draw[dotted] (4,-2) -- (4,8);
\draw[dotted] (6,-2) -- (6,8);
\draw[dotted] (8,-2) -- (8,8);
\draw[dotted] (10,-2) -- (10,8);
\draw[edge] (2,0) -- (8,0);
\draw[edge] (2,2) -- (8,2);
\draw[edge] (2,4) -- (6,4);
\draw[edge] (2,0) -- (2,4);
\draw[edge] (4,0) -- (4,4);
\draw[edge] (6,0) -- (6,4);
\draw[edge] (8,0) -- (8,2);
\draw[red, line width=5pt] (2.6,5) -- (3.4,5) ;
\draw[red, line width=5pt] (6.6,3) -- (7.4,3) ;
\draw[red, line width=5pt] (8.6,1) -- (9.4,1) ;
\end{tikzpicture}%
\captionsetup{labelformat=empty}
\captionof{figure}{$(4,3,1)$}
\end{minipage}
\begin{minipage}{.2\textwidth}
\centering
\begin{tikzpicture}[x=\unitsize,y=\unitsize,baseline=0]
\tikzset{vertex/.style={}}%
\tikzset{edge/.style={very thick}}%
\draw[dotted] (0,0) -- (12,0);
\draw[dotted] (0,2) -- (12,2);
\draw[dotted] (0,4) -- (12,4);
\draw[dotted] (0,6) -- (12,6);
\draw[dotted] (2,-2) -- (2,8);
\draw[dotted] (4,-2) -- (4,8);
\draw[dotted] (6,-2) -- (6,8);
\draw[dotted] (8,-2) -- (8,8);
\draw[dotted] (10,-2) -- (10,8);
\draw[edge] (2,0) -- (8,0);
\draw[edge] (2,2) -- (8,2);
\draw[edge] (2,4) -- (6,4);
\draw[edge] (2,0) -- (2,4);
\draw[edge] (4,0) -- (4,4);
\draw[edge] (6,0) -- (6,4);
\draw[edge] (8,0) -- (8,2);
\draw[red, line width=5pt] (7,3) -- (9,3) -- (9,1) ;
\end{tikzpicture}%
\captionsetup{labelformat=empty}
\captionof{figure}{$(4,4)$}
\end{minipage}
\begin{minipage}{.2\textwidth}
\centering
\begin{tikzpicture}[x=\unitsize,y=\unitsize,baseline=0]
\tikzset{vertex/.style={}}%
\tikzset{edge/.style={very thick}}%
\draw[dotted] (0,0) -- (12,0);
\draw[dotted] (0,2) -- (12,2);
\draw[dotted] (0,4) -- (12,4);
\draw[dotted] (0,6) -- (12,6);
\draw[dotted] (2,-2) -- (2,8);
\draw[dotted] (4,-2) -- (4,8);
\draw[dotted] (6,-2) -- (6,8);
\draw[dotted] (8,-2) -- (8,8);
\draw[dotted] (10,-2) -- (10,8);
\draw[edge] (2,0) -- (8,0);
\draw[edge] (2,2) -- (8,2);
\draw[edge] (2,4) -- (6,4);
\draw[edge] (2,0) -- (2,4);
\draw[edge] (4,0) -- (4,4);
\draw[edge] (6,0) -- (6,4);
\draw[edge] (8,0) -- (8,2);
\draw[red, line width=5pt] (2.6,5) -- (3.4,5) ;
\draw[red, line width=5pt] (7,3) -- (9,3) -- (9,1) ;
\end{tikzpicture}%
\captionsetup{labelformat=empty}
\captionof{figure}{$(4,4,1)$}
\end{minipage}
\end{example}

\bigskip

\section{The linear strands are $\gl(m|n)$-modules}\label{sec:strands=glmn}

In this section we consider finitely generated $\GL$-equivariant (graded) $S$-modules $M$. We say that $M$ is \defi{symmetric} if it decomposes as
\begin{equation}\label{eq:M-symmetric}
M = \bigoplus_{\ll\in\bb{N}_{\dom}} (\SS_{\ll}V_0 \oo \SS_{\ll}V_1)^{\oplus m_{\ll}},\mbox{ where each }m_{\ll}\mbox{ is a non-negative integer}.
\end{equation}
We will always consider the natural grading on $M$ where $\SS_{\ll}V_0 \oo \SS_{\ll}V_1$ is placed in degree $|\ll|$. Examples of symmetric modules include all the $\GL$-equivariant ideals $I\subseteq S$, as well as quotients of such ideals. The goal of this section is to explain why for a symmetric module $M$, the linear strands of its minimal free resolution translate via the BGG correspondence to modules over the Lie superalgebra $\g=\gl(m|n)$: in fact, the modules that we get in this way have composition factors given by the simples $\LL_{\ll}$ defined in Section~\ref{subsec:glmn}.

\begin{theorem}\label{thm:strands=glmn}
 If $M$ is a symmetric $S$-module then for every $t\in\bb{Z}$ we have that $H_t(\tl{\bf R}(M))$ is a $\g$-module of finite length, with simple composition factors of the form $\LL_{\ll}$ with $\ll\in\bb{N}^n_{\dom}$.
\end{theorem}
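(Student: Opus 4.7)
The plan is to use the symmetric hypothesis to realize each term of the BGG complex $\tl{\bf R}(M)$ as a direct sum of Kac modules, promote the $\g_0$-equivariance of the differential to full $\g$-equivariance, and then read off the structure of the homology from Theorem~\ref{thm:compos-Kac}.

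First, dualizing (\ref{eq:M-symmetric}) one obtains $M^\vee_t = \bigoplus_{|\ll|=t}(\SS_\ll W_0 \oo \SS_\ll W_1)^{\oplus m_\ll}$, and tensoring with $E$ gives
\[\tl{\bf R}(M)_t = E \oo M^\vee_t = \bigoplus_{|\ll|=t}\K_\ll^{\oplus m_\ll}\]
by the explicit presentation $\K_\ll = E \oo (\SS_\ll W_0 \oo \SS_\ll W_1)$ recorded in Section~\ref{subsec:glmn}. This identification equips each $\tl{\bf R}(M)_t$ with a $\g$-module structure refining its evident $\g_0$-equivariant $E$-module structure; since each summand is a Kac module, Theorem~\ref{thm:compos-Kac} guarantees that every term of the complex has finite length with composition factors of the form $\LL_\mu$ for $\mu \in \bb{N}^n_{\dom}$.

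Next one must check that the BGG differential $d(e \oo \phi) = \sum_{i,j} e\cdot e_{i,j} \oo x_{i,j}\cdot \phi$ is $\g$-equivariant. The $\g_0$-equivariance is immediate because $\sum e_{i,j} \oo x_{i,j} \in \g_{-1} \oo \g_1$ is the canonical $\g_0$-invariant element, and super-commutativity with $\g_{-1}$ (acting by left multiplication on $E$) follows from the anti-commutativity of $E$. The crucial step is super-commutativity with $\g_1$: expanding $\{d,x\}$ for $x \in \g_1$ produces two kinds of contributions — terms coming from the Kac-module $\g_1$-action (where $x$ is commuted past the $e_{i,j}$'s via $[x,e_{i,j}] \in \g_0$ and then applied to the $\SS_\ll W_0 \oo \SS_\ll W_1$ component through the natural $\g_0$-representation) and terms coming from the $S$-multiplication $x_{i,j}\cdot\phi$ appearing in $d$. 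The symmetric decomposition (\ref{eq:M-symmetric}) is what makes the $\g_0$-action on $M^\vee$ compatible with the $S$-multiplication maps in exactly the way needed for these two contributions to cancel.

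Once $d$ is $\g$-equivariant, $H_t(\tl{\bf R}(M)) = \ker(d)/\operatorname{im}(d)$ inherits a $\g$-module structure as a subquotient of the finite-length $\g$-module $\bigoplus_{|\ll|=t}\K_\ll^{\oplus m_\ll}$; since Theorem~\ref{thm:compos-Kac} tells us that the composition factors of each $\K_\ll$ with $\ll\in\bb{N}^n_{\dom}$ are of the form $\LL_\mu$ with $\mu\in\bb{N}^n_{\dom}$, the same holds for $H_t(\tl{\bf R}(M))$. The main obstacle is the $\g_1$-equivariance of $d$: the two a priori unrelated actions of $V_0 \oo V_1$ on the complex — the Kac contraction action on $E$ and the commutative $S$-multiplication action on $M^\vee$ — must be combined into a single action consistent with the super-bracket relations of $\g$, and the cancellation in $\{d,x\}$ requires the symmetric hypothesis in an essential way. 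Pinning down the precise form of the $\g_1$-action and verifying the cancellations is the technical heart of the argument.
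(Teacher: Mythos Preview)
Your overall architecture matches the paper's: identify each term $\tl{\bf R}(M)_t$ with $\bigoplus_{|\ll|=t}\K_{\ll}^{\oplus m_{\ll}}$, argue that the differential is $\g$-equivariant, and then read off finite length and the shape of the composition factors from Theorem~\ref{thm:compos-Kac}. The divergence is entirely in the middle step.

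You propose to verify $\g_1$-equivariance of $d$ by a direct computation of the super-commutator $\{d,x\}$, claiming that two families of terms cancel. You correctly flag this as the technical heart, but you do not actually perform it; the proposal ends with a promissory note. Note too that the component of $d$ mapping a given $\K_{\mu}$ to a given $\K_{\ll}$ is, up to a scalar coming from the $S$-module structure of $M$, the \emph{unique} $\g_0$-equivariant $E$-module map (by Pieri), so whatever cancellation you are envisioning must hold for that canonical map and hence for every $\g_0$-equivariant $E$-map $\K_{\mu}\to\K_{\ll}$; the symmetric hypothesis enters only in setting up the identification with Kac modules, not in the cancellation itself.

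The paper sidesteps the computation with Lemma~\ref{eq:g0-equiv-is-gequiv}: when $\mu$ is $\ll$ plus one box, both $\Hom_{\g_0,E}(\K_{\mu},\K_{\ll})$ and $\Hom_{\g}(\K_{\mu},\K_{\ll})$ are one-dimensional --- the first by Pieri, the second by Frobenius reciprocity once one knows (from \cite{su-zhang}) that the copy of $\SS_{\mu}W_0\oo\SS_{\mu}W_1$ inside $\K_{\ll}$ is generated by a primitive weight vector and hence is a $\p$-submodule. Since the latter Hom sits inside the former and both have dimension one, they coincide, so every $\g_0$-equivariant $E$-map is automatically $\g$-equivariant. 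This dimension-count argument is short and complete; your direct approach would work in principle but requires explicitly unwinding the induced $\g_1$-action on Kac modules, which is considerably more laborious and which you have not carried out.
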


To prove the theorem we will show that the complex $\tl{\bf R}(M)$ is in fact a complex of $\g$-modules, each of which has finite length and has composition factors of the form $\LL_{\ll}$. The key observation is the following.
\begin{lemma}\label{eq:g0-equiv-is-gequiv}
 Suppose that $\mu\in\bb{N}^n_{\dom}$ is a partition obtained from $\ll$ by adding a single box. We have that every $\g_0$-equivariant homomorphism of $E$-modules between $\K_{\mu}$ and $\K_{\ll}$ is also $\g$-equivariant (and vice-versa).
\end{lemma}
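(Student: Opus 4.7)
The plan is to use Frobenius reciprocity to reduce the lemma to a multiplicity-one statement about $\g_0$-homomorphisms, and then to verify the required vanishing by a short highest-weight calculation. The direction that a $\g$-equivariant map is automatically $\g_0$-equivariant and $E$-linear is immediate from the inclusions $\g_0\subset\g$ and $E=\bw\g_{-1}\subset\mc{U}(\g)$, so only the converse needs proof.

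First I would observe that $\K_{\mu}=E\otimes U_{\mu}$ is free over $E$ on $U_{\mu}=\SS_{\mu}W_0\otimes\SS_{\mu}W_1$, so every $E$-linear map $f\colon\K_{\mu}\to\K_{\ll}$ is determined by its restriction $f_0=f|_{U_{\mu}}$. Frobenius reciprocity for $\K_{\mu}=\Ind_{\p}^{\g}(U_{\mu})$ identifies $\g$-equivariant $f$ with $\p$-equivariant $f_0$, and since by construction $\g_1$ annihilates $U_{\mu}$, being $\p$-equivariant is the same as being $\g_0$-equivariant together with the constraint $f_0(U_{\mu})\subseteq(\K_{\ll})^{\g_1}$. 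The lemma therefore reduces to showing that every $\g_0$-equivariant map $U_{\mu}\to\K_{\ll}$ has image inside $(\K_{\ll})^{\g_1}$. Next I would pin down the $U_{\mu}$-isotypic component of $\K_{\ll}=\bigoplus_{k\geq 0}\bw^k\g_{-1}\otimes U_{\ll}$: combining the Cauchy formula for $\bw^k(W_0\otimes W_1)$ with Pieri's rule applied to $\SS_{\nu}W_i\otimes\SS_{\ll}W_i$ for $i=0,1$, one finds that $\SS_{\mu}W_0\otimes\SS_{\mu}W_1$ appears in $\K_{\ll}$ exactly once, namely in the degree-one piece $\g_{-1}\otimes U_{\ll}$ via $\nu=\nu^t=(1)$ (the unique box added in passing from $\ll$ to $\mu$). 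Thus $\Hom_{\g_0}(U_{\mu},\K_{\ll})$ is one-dimensional, spanned by the Pieri inclusion $U_{\mu}\hookrightarrow\g_{-1}\otimes U_{\ll}$, and it remains to check that this inclusion lands in $(\K_{\ll})^{\g_1}$.

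Finally I would verify the vanishing by a direct computation. The action of $\xi\in\g_1$ on $e\otimes u\in\g_{-1}\otimes U_{\ll}$ is given by $\xi\cdot(e\otimes u)=[\xi,e]\cdot u\in U_{\ll}$, so restricting to the Pieri copy of $U_{\mu}$ produces a $\g_0$-equivariant map $\psi\colon\g_1\otimes U_{\mu}\to U_{\ll}$. A second application of Pieri shows that $\Hom_{\g_0}(\g_1\otimes U_{\mu},U_{\ll})$ is again one-dimensional, so $\psi$ is determined up to scalar and may be tested on a single highest weight vector. Writing $\xi=v_0\otimes v_1$ and $e=w_0\otimes w_1$ at the relevant weights, the bracket $[\xi,e]$ splits as a $\gl(V_0)$-summand plus a $\gl(W_1)$-summand; because the identification $\gl(V_i)\simeq\gl(W_i)$ fixed in Section~\ref{subsec:repthy} is $\phi\mapsto-\phi^{\vee}$, the $\gl(W_1)$-summand acts on $\SS_{\ll}W_1$ through the contragredient while the $\gl(V_0)$-summand acts naturally on $\SS_{\ll}W_0$, so the two contributions to $[\xi,e]\cdot u$ come out with opposite signs and cancel. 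The hard part will be this final sign bookkeeping: one must verify carefully that the $\gl(V_0)$ and $\gl(W_1)$ pieces of $[\xi,e]$ contribute with exactly opposite signs at the highest weight vector, so that they cancel rather than reinforce.
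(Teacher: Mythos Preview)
Your outline matches the paper's argument exactly up to the point where one must show that the unique $\g_0$-copy of $U_\mu=\SS_\mu W_0\otimes\SS_\mu W_1$ inside $\K_\ll$ is annihilated by $\g_1$. The paper does not compute this: it simply records that the highest weight vector of $U_\mu$ is a \emph{primitive weight vector} in $\K_\ll$, citing \cite[Section~3.2]{su-zhang}, and then both proofs conclude identically via Frobenius reciprocity and the multiplicity-one count you already carried out.

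Your direct verification of primitivity is thus the only place the two diverge, and as written the last step is not yet a proof. Observing that the $\gl(W_0)$- and $\gl(W_1)$-pieces of $[\xi,e]$ act with opposite signs only shows that $\psi$ is a \emph{difference} $\psi_0-\psi_1$ of two $\g_0$-maps $\g_1\otimes U_\mu\to U_\ll$; it does not by itself force $\psi_0=\psi_1$ (indeed, on a generic vector of $\g_{-1}\otimes U_\ll$ the two pieces do not cancel). What makes the difference vanish on the Pieri copy of $U_\mu$ is the $W_0\leftrightarrow W_1$ symmetry of the diagonal modules $U_\ll,U_\mu$: after restriction, each $\psi_i$ factors as a tensor product of two $\gl(W)$-maps $W^\vee\otimes\SS_\mu W\to\SS_\ll W$ built functorially from the Pieri inclusion and the standard $\gl(W)$-action, and since this one-dimensional $\Hom$ depends only on the pair $(\ll,\mu)$ and not on $\dim W$, the same proportionality constant appears for $W=W_0$ and for $W=W_1$, giving $\psi_0=\psi_1$. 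With that symmetry argument supplied (or a direct check on the highest weight vector, which is easy once the Pieri image is written out explicitly), your computation is complete and furnishes a self-contained substitute for the paper's citation.
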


\begin{proof}
 Under the assumptions on $\ll$ and $\mu$, it follows from Pieri's rule that there exists a unique (up to scalar) $\g_0$-equivariant inclusion
\[ S_{\mu}W_0 \oo S_{\mu}W_1 \subset \K_{\ll},\]
and since $S_{\mu}W_0 \oo S_{\mu}W_1$ are the generators of the free $E$-module $\K_{\mu}$ we get
\begin{equation}\label{eq:Hom-g0-E=C}
 \bb{C}=\Hom_{\g_0}(S_{\mu}W_0 \oo S_{\mu}W_1,\K_{\ll}) = \Hom_{\g_0,E}(\K_{\mu},\K_{\ll}),
\end{equation}
where $\Hom_{\g_0,E}(\bullet,\bullet)$ is the $\Hom$-functor in the category of $\g_0$-equivariant $E$-modules.

The statement of our lemma is then equivalent to the fact that there exists a unique (up to scalar) non-zero homomorphism of $\g$-modules between $\K_{\mu}$ and $\K_{\ll}$. Since the highest weight vector of $S_{\mu}W_0 \oo S_{\mu}W_1$ is a \defi{primitive weight vector} in $\K_{\ll}$ (see \cite[Section~3.2]{su-zhang} for the terminology) it follows that the subspace $S_{\mu}W_0 \oo S_{\mu}W_1$ of $\K_{\ll}$ is annihilated by $\g_1$ and therefore it forms a $\p$-submodule of $\K_{\ll}$. We get that 
\begin{equation}\label{eq:Hom-g=C}
 \bb{C}=\Hom_{\g_0}(S_{\mu}W_0 \oo S_{\mu}W_1,\K_{\ll}) = \Hom_{\p}(S_{\mu}W_0 \oo S_{\mu}W_1,\K_{\ll}) = \Hom_{\g}(\K_{\mu},\K_{\ll}),
\end{equation}
where the last equality follows from Frobenius reciprocity. Combining (\ref{eq:Hom-g0-E=C}) with (\ref{eq:Hom-g=C}) yields the desired conclusion.
\end{proof}

\begin{proof}[Proof of Theorem~\ref{thm:strands=glmn}]
 Consider a symmetric module $M$ and note that via the BGG correspondence we have that $H_t(\tl{\bf R}(M))$ is the middle homology of the $3$-term complex
\begin{equation}\label{eq:Mvee-3term}
 M^{\vee}_{t+1} \oo E \lra M^{\vee}_t \oo E \lra M^{\vee}_{t-1} \oo E
\end{equation}
where the maps respect the $E$-module structure and are $\GL$-equivariant, and hence also $\g_0$-equivariant. Using (\ref{eq:M-symmetric}) we get that
\[ M^{\vee}_t \oo E = \bigoplus_{|\ll| = t} \K_{\ll}^{\oplus m_{\ll}}\]
so the maps in (\ref{eq:Mvee-3term}) are sums of $\g_0$-equivariant homomorphisms of $E$-modules between $\K_{\mu}$ and $\K_{\ll}$, where $\mu,\ll$ vary over pairs of partitions with $\mu$ obtained from $\ll$ by the addition of a single box. Such maps are by Lemma~\ref{eq:g0-equiv-is-gequiv} homomorphisms of $\g$-modules, so (\ref{eq:Mvee-3term}) is a complex of $\g$-modules, and hence the same is true about its cohomology. Since $M$ is finitely generated, its graded components are finite dimensional, so $M^{\vee}_t \oo E$ has finite length. Since each of the modules $\K_{\ll}$ has composition factors of the form $\LL_{\delta}$, the same must be true about $H_t(\tl{\bf R}(M))$.
\end{proof}

\section{The main conjecture}\label{sec:conjecture}

 For $\ll$ a partition with at most $n$ parts,  we consider the set of $\ll$-admissible augmented Dyck patterns 
 \[\bb{D}=(D_1,\cdots,D_r;\bb{B})\]
 with no Dyck path of length one, and for which $\ll(\bb{D})$ has at most $n$ parts:
\begin{equation}\label{eq:A-ll-n}
\mc{A}(\ll;n) = \{ \bb{D}\mbox{ a }\ll\mbox{-admissible Dyck pattern}: |D_i|\geq 3\mbox{ for all }i=1,\cdots,r,\mbox{ and }\ll(\bb{D})_j = 0\mbox{ for }j>n\}.
\end{equation}

\begin{conjecture}\label{conj:main}
 Suppose that $m\geq n$ are positive integers, $S$ is the coordinate ring of $\bb{C}^{m\times n}$, $\ll$ is a partition with at most $n$ parts, and $I_{\ll}\subseteq S$ is the corresponding $\GL$-equivariant ideal. For $b\geq 0$ we have the following equality in the Grothendieck group $K_0(\g)$ of finite dimensional representations of $\g$.
\begin{equation}\label{eq:conj-main}
  \left[H_{|\ll|+b}(\tl{\bf R}(I_{\ll})) \right]= \sum_{\substack{\bb{D}\in\mc{A}(\ll;n) \\ b(\bb{D}) = b}} [\bb{L}_{\ll(\bb{D})}].
\end{equation}
\end{conjecture}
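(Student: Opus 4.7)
My plan is to analyze the BGG complex $\tl{\bf R}(I_\ll)$ directly as a complex of $\g$-modules, using Theorem~\ref{thm:strands=glmn} and Lemma~\ref{eq:g0-equiv-is-gequiv}, and compare its homology to the predicted sum over augmented Dyck patterns.

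\textbf{Step 1 (Kac module decomposition and Euler characteristic).} First I would make explicit the Kac-module decomposition of each term of $\tl{\bf R}(I_\ll)$. Since $S$ is multiplicity-free under $\GL$ by~(\ref{eq:Cauchy}), so is the ideal $I_\ll$, and a standard Pieri/Littlewood--Richardson argument characterizes the set of partitions $\mu\in\bb{N}^n_{\dom}$ that occur in $I_\ll$. This gives a $\g_0$-equivariant identification $(I_\ll)^{\vee}_t \oo E = \bigoplus_{\mu\text{ occurs},\,|\mu|=t} \K_\mu$, and by Lemma~\ref{eq:g0-equiv-is-gequiv} the differentials of $\tl{\bf R}(I_\ll)$ are $\g$-equivariant. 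Taking the Euler characteristic in $K_0(\g)$ of both sides of~(\ref{eq:conj-main}) and expanding $[\K_\mu]$ via Theorem~\ref{thm:compos-Kac}, the conjecture reduces to the combinatorial identity
\[
\sum_{\substack{\mu\text{ occurs in }I_\ll\\ \bb{D}'\in\mc{K}(\mu;n),\ \mu(\bb{D}')=\nu}} (-1)^{|\mu|-|\ll|} \;=\; \sum_{\substack{\bb{D}\in\mc{A}(\ll;n)\\ \ll(\bb{D})=\nu}} (-1)^{b(\bb{D})}
\]
for every $\nu\in\bb{N}^n_{\dom}$. I would attempt to prove this via a sign-reversing involution, matching Dyck paths of length one (which appear in $\mc{K}(\mu;n)$ but not in $\mc{A}(\ll;n)$) and bullet-extensions of a Dyck path on one side with corresponding enlargements of $\mu$ on the other.

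\textbf{Step 2 (Strand-by-strand identification).} Next, the central difficulty is passing from Grothendieck-group equality to the statement for each individual $b$. I would try two complementary routes. The first is to realize $\tl{\bf R}(I_\ll)$ as coming from a BGG-type or tilting resolution in a highest-weight subcategory of $\gl(m|n)$-mod in which the $\K_\mu$ are the standard objects and the $\LL_\nu$ are the simples; the cohomological grading of such a resolution should coincide with the bullet statistic $b(\bb{D})$, and parabolic Kazhdan--Lusztig theory for $\gl(m|n)$ (as developed by Brundan and Serganova) should then match the augmented Dyck-pattern count strand by strand. The second route is induction on $|\ll|$ using short exact sequences $0\to I_{\ll'}\to I_\ll\to Q\to 0$, where $\ll'$ is obtained from $\ll$ by adding a box at a suitable corner and $Q$ admits a similar $\GL$-decomposition; one then applies the exact functor $\tl{\bf R}(-)$ and chases the long exact sequence together with the induction hypothesis and Theorem~\ref{thm:linear-strand} as the base case for the first strand.

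\textbf{Main obstacle.} The principal obstacle is Step~2: Grothendieck-group equality does not rule out cancellation between adjacent strands, so one needs a purity/concentration statement asserting that each simple $\LL_\nu$ contributes to at most one $H_{|\ll|+b}(\tl{\bf R}(I_\ll))$, with $b$ determined by the relative position of $\nu$ and $\ll$. Already the case $b=0$ treated in Theorem~\ref{thm:linear-strand} relies on the detailed analysis of the submodule lattice of Kac modules from~\cite{su-zhang}; for general $b$ one expects to need the full parabolic Kazhdan--Lusztig machinery for $\gl(m|n)$ together with an explicit identification of $\tl{\bf R}(I_\ll)$ with the minimal resolution of a natural object in an appropriate category $\mc{O}$, and ensuring that the homological index matches the bullet-size grading on augmented Dyck patterns is the most delicate point.
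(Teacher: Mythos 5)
The statement you set out to prove is Conjecture~\ref{conj:main}, which the paper explicitly leaves open. There is no proof in the paper to compare against; what the paper supplies instead is three pieces of evidence: the case $b=0$ (Theorem~\ref{thm:linear-strand}, proved using the description in~\cite{su-zhang} of the submodule of $\K_{\ll}$ generated by primitive weight vectors), the consistency of the implied regularity formula~(\ref{eq:regularity}) with the proven formula~(\ref{eq:proved-reg}), and the rectangular case $\ll=a\times b$, where the conjectural expression is shown to recover \cite[Theorem~3.1]{raicu-weyman-syzygies}. None of these establishes the conjecture for a general $\ll$ and general $b\geq 1$. So the right way to read your submission is not as a proof but as a plan, and in that capacity it is an accurate diagnosis.

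Your Step~1 is sound in spirit: the differentials in $\tl{\bf R}(I_{\ll})$ are $\g$-equivariant by Lemma~\ref{eq:g0-equiv-is-gequiv}, each term is a finite direct sum of $\K_{\mu}$'s (note, though, that determining which $\mu$ occur in $I_{\ll}$ is governed by the De Concini--Eisenbud--Procesi classification of $\GL$-invariant ideals, not by a simple Pieri argument), and taking Euler characteristics gives a necessary identity in $K_0(\g)$ that a sign-reversing involution on Dyck patterns could plausibly verify. But this is only a consistency check. The gap is exactly where you say it is: nothing in Step~1 prevents a simple $\LL_{\nu}$ from appearing with cancellation across several adjacent $H_{|\ll|+b}$, and one needs a concentration or purity statement asserting that each $\LL_{\nu}$ can occur in at most one homology group, with $b$ determined by $\nu$ and $\ll$. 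Neither of your Step~2 routes overcomes this in the paper's framework. For Route~A, $\tl{\bf R}(I_{\ll})$ is not presented as a resolution of a single object in a highest-weight category, and matching the homological index to the bullet statistic $b(\bb{D})$ would require building such a model, which is precisely what is missing. For Route~B, the quotient $Q=I_{\ll}/I_{\ll'}$ is indeed symmetric so Theorem~\ref{thm:strands=glmn} applies to it, but the connecting maps in the long exact sequence can shift $b$, so the induction does not close without the same concentration statement. In short, your proposal does not contain a proof, and the paper does not either: the conjecture is genuinely open, and your "main obstacle" is the correct identification of why.
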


Since $I_{\ll}$ has no generators of degree smaller than $|\ll|$ it follows that $H_{|\ll|+b}(\tl{\bf R}(I_{\ll}))=0$ for $b<0$, which is why Conjecture~\ref{conj:main} is restricted to $b\geq 0$. The Castelnuovo--Mumford regularity of $I_{\ll}$ is the maximal value of $r$ for which $H_{r}(\tl{\bf R}(I_{\ll}))\neq 0$ and will be discussed in Section~\ref{subsec:regularity}. We think of each of the $\g$-modules $\bb{L}_{\ll(\bb{D})}$ as giving rise via the BGG correspondence to a linear complex appearing as a subquotient in the minimal free resolution of $I_{\ll}$. As such, $b(\bb{D})$ is measuring the vertical displacement of $\bb{L}_{\ll(\bb{D})}$ within the Betti table, while $d(\bb{D})$ is measuring its horizontal displacement. More precisely, $\bb{L}_{\ll(\bb{D})}$ corresponds to a linear complex that appears entirely within the row indexed by $|\ll| + b(\bb{D})$ of the Betti table, starting in column $d(\bb{D})$. 

\begin{example}\label{ex:BIlambda} Consider $m=n=3$ and $\ll=(3,2)$. The conjecture asserts that
\begin{equation}\label{eq:equiv-b32}
\begin{aligned}
\left[H_{5}(\tl{\bf R}(I_{(3,2)})) \right] &= [\LL_{(3,2)}] + [\LL_{(4,4)}]\\
\left[H_{6}(\tl{\bf R}(I_{(3,2)})) \right] &= [\LL_{(3,3,3)}] + [\LL_{(4,4,3)}]\\
\left[H_{7}(\tl{\bf R}(I_{(3,2)})) \right] &= [\LL_{(5,5,5)}]
\end{aligned}
\end{equation}
since the Dyck patterns in $\mc{A}((3,2);3)$ are as follows (labelled by $\ll(\bb{D})$):

\begin{minipage}{.18\textwidth}
\centering
\begin{tikzpicture}[x=\unitsize,y=\unitsize,baseline=0]
\tikzset{vertex/.style={}}%
\tikzset{edge/.style={very thick}}%
\draw[dotted] (0,0) -- (10,0);
\draw[dotted] (0,2) -- (10,2);
\draw[dotted] (0,4) -- (10,4);
\draw[dotted] (0,6) -- (10,6);
\draw[dotted] (2,-2) -- (2,8);
\draw[dotted] (4,-2) -- (4,8);
\draw[dotted] (6,-2) -- (6,8);
\draw[dotted] (8,-2) -- (8,8);
\draw[edge] (2,0) -- (8,0);
\draw[edge] (2,2) -- (8,2);
\draw[edge] (2,4) -- (6,4);
\draw[edge] (2,0) -- (2,4);
\draw[edge] (4,0) -- (4,4);
\draw[edge] (6,0) -- (6,4);
\draw[edge] (8,0) -- (8,2);
\end{tikzpicture}%
\captionsetup{labelformat=empty}
\captionof{figure}{$(3,2)$}
\end{minipage}
\begin{minipage}{.18\textwidth}
\centering
\begin{tikzpicture}[x=\unitsize,y=\unitsize,baseline=0]
\tikzset{vertex/.style={}}%
\tikzset{edge/.style={very thick}}%
\draw[dotted] (0,0) -- (10,0);
\draw[dotted] (0,2) -- (10,2);
\draw[dotted] (0,4) -- (10,4);
\draw[dotted] (0,6) -- (10,6);
\draw[dotted] (2,-2) -- (2,8);
\draw[dotted] (4,-2) -- (4,8);
\draw[dotted] (6,-2) -- (6,8);
\draw[dotted] (8,-2) -- (8,8);
\draw[edge] (2,0) -- (8,0);
\draw[edge] (2,2) -- (8,2);
\draw[edge] (2,4) -- (6,4);
\draw[edge] (2,0) -- (2,4);
\draw[edge] (4,0) -- (4,4);
\draw[edge] (6,0) -- (6,4);
\draw[edge] (8,0) -- (8,2);
\draw[red, line width=5pt] (7,3) -- (9,3) -- (9,1) ;
\end{tikzpicture}%
\captionsetup{labelformat=empty}
\captionof{figure}{$(4,4)$}
\end{minipage}
\begin{minipage}{.18\textwidth}
\centering
\begin{tikzpicture}[x=\unitsize,y=\unitsize,baseline=0]
\tikzset{vertex/.style={}}%
\tikzset{edge/.style={very thick}}%
\draw[dotted] (0,0) -- (10,0);
\draw[dotted] (0,2) -- (10,2);
\draw[dotted] (0,4) -- (10,4);
\draw[dotted] (0,6) -- (10,6);
\draw[dotted] (2,-2) -- (2,8);
\draw[dotted] (4,-2) -- (4,8);
\draw[dotted] (6,-2) -- (6,8);
\draw[dotted] (8,-2) -- (8,8);
\draw[edge] (2,0) -- (8,0);
\draw[edge] (2,2) -- (8,2);
\draw[edge] (2,4) -- (6,4);
\draw[edge] (2,0) -- (2,4);
\draw[edge] (4,0) -- (4,4);
\draw[edge] (6,0) -- (6,4);
\draw[edge] (8,0) -- (8,2);
\draw[red, line width=5pt] (5,5) -- (7,5) -- (7,3);
\draw[fill=green] (3,5) circle [radius=0.3] ;
\end{tikzpicture}%
\captionsetup{labelformat=empty}
\captionof{figure}{$(3,3,3)$}
\end{minipage}
\begin{minipage}{.2\textwidth}
\centering
\begin{tikzpicture}[x=\unitsize,y=\unitsize,baseline=0]
\tikzset{vertex/.style={}}%
\tikzset{edge/.style={very thick}}%
\draw[dotted] (0,0) -- (10,0);
\draw[dotted] (0,2) -- (10,2);
\draw[dotted] (0,4) -- (10,4);
\draw[dotted] (0,6) -- (10,6);
\draw[dotted] (2,-2) -- (2,8);
\draw[dotted] (4,-2) -- (4,8);
\draw[dotted] (6,-2) -- (6,8);
\draw[dotted] (8,-2) -- (8,8);
\draw[edge] (2,0) -- (8,0);
\draw[edge] (2,2) -- (8,2);
\draw[edge] (2,4) -- (6,4);
\draw[edge] (2,0) -- (2,4);
\draw[edge] (4,0) -- (4,4);
\draw[edge] (6,0) -- (6,4);
\draw[edge] (8,0) -- (8,2);
\draw[red, line width=5pt] (5,5) -- (7,5) -- (7,3) -- (9,3) -- (9,1) ;
\draw[fill=green] (3,5) circle [radius=0.3] ;
\end{tikzpicture}%
\captionsetup{labelformat=empty}
\captionof{figure}{$(4,4,3)$}
\end{minipage}
\begin{minipage}{.2\textwidth}
\centering
\begin{tikzpicture}[x=\unitsize,y=\unitsize,baseline=0]
\tikzset{vertex/.style={}}%
\tikzset{edge/.style={very thick}}%
\draw[dotted] (0,0) -- (14,0);
\draw[dotted] (0,2) -- (14,2);
\draw[dotted] (0,4) -- (14,4);
\draw[dotted] (0,6) -- (14,6);
\draw[dotted] (2,-2) -- (2,8);
\draw[dotted] (4,-2) -- (4,8);
\draw[dotted] (6,-2) -- (6,8);
\draw[dotted] (8,-2) -- (8,8);
\draw[dotted] (10,-2) -- (10,8);
\draw[dotted] (12,-2) -- (12,8);
\draw[edge] (2,0) -- (8,0);
\draw[edge] (2,2) -- (8,2);
\draw[edge] (2,4) -- (6,4);
\draw[edge] (2,0) -- (2,4);
\draw[edge] (4,0) -- (4,4);
\draw[edge] (6,0) -- (6,4);
\draw[edge] (8,0) -- (8,2);
\draw[red, line width=5pt] (7,3) -- (9,3) -- (9,1);
\draw[red, line width=5pt] (7,5) -- (11,5) -- (11,1);
\draw[fill=green] (3,5) circle [radius=0.3] ;
\draw[fill=green] (5,5) circle [radius=0.3] ;
\end{tikzpicture}%
\captionsetup{labelformat=empty}
\captionof{figure}{$(5,5,5)$}
\end{minipage}

\end{example}

\bigskip

The Betti table of $I_{(3,2)}$ computed using Macaulay2 \cite{M2} is as follows (recall the convention that the Betti number $\b_{i,i+j} = \dim_{\bb{C}}\Tor^S_i(I_{(3,2)},S)_{i+j}$ is placed in row $j$, column $i$):
\begin{equation}\label{eq:betti-I32}
\begin{matrix}
     &0&1&2&3&4&5&6&7&8\\
     \text{5:}&225&
     1132&2673&3807&3485&2016&675&100&\text{.}\\\text{6:}&\text{.}&\text{.}&\text{.}&1&\text{.}&
     9&16&9&\text{.}\\\text{7:}&\text{.}&\text{.}&\text{.}&\text{.}&\text{.}&\text{.}&\text{.}&\text{.}&1\\
\end{matrix}
\end{equation}
The reader may now reconcile (\ref{eq:equiv-b32}) with (\ref{eq:betti-I32}) based on the following Hilbert series calculations, which can be obtained starting with the Hilbert series of Kac modules by inverting the relationship between simple and Kac modules in Theorem~\ref{thm:compos-Kac} (see for instance \cite[Section~4]{su-zhang-char} for the general case): if we write $\HS_{\mu}(t)$ for the Hilbert series of the graded $E$-module $\LL_{\mu}$ then
\[\HS_{(3,2)}(t) = 225t^5 + 1132t^6 + 2673t^7 + 3582t^8 + 2785t^9 + 1188t^{10} + 225 t^{11},\]
\[\HS_{(4,4)}(t) = 225t^8 + 700t^9 + 828t^{10} + 450t^{11} + 100 t^{12},\]
\[\HS_{(3,3,3)}(t) = t^9,\quad\HS_{(4,4,3)}(t) = 9t^{11}+16t^{12}+9t^{13},\mbox{ and }\HS_{(5,5,5)}(t) = t^{15}.\]

\section{Some evidence in support of the main conjecture}\label{sec:evidence}

The goal of this section is to illustrate some results that provide supporting evidence for Conjecture~\ref{conj:main}. In Section~\ref{subsec:linear-strand} we prove that the said conjecture predicts correctly the structure of the first linear strand in the minimal free resolution of any ideal $I_{\ll}$. If true, Conjecture~\ref{conj:main} would imply a formula for the Castelnuovo--Mumford regularity of any ideal $I_{\ll}$; we explain in Section~\ref{subsec:regularity} how this formula is equivalent to the one proved in \cite[Theorem~5.1]{raicu-weyman}. Finally, in Section~\ref{subsec:rectangular} we consider the ideals $I_{\ll}$ when $\ll$ is a rectangular partition: we prove that Conjecture~\ref{conj:main} holds in this case, by showing that it is equivalent to \cite[Theorem~3.1]{raicu-weyman-syzygies}.

\subsection{The first linear strand}\label{subsec:linear-strand}

 We consider a partition $\ll\in\bb{N}^n_{\dom}$ and let $\mc{A}^{\circ}(\ll;n)\subseteq\mc{A}(\ll;n)$ be the subset consisting of the Dyck patterns with no bullets:
 \[\mc{A}^{\circ}(\ll;n) = \{\bb{D}=(D_1,\cdots,D_r)\mbox{ with }\bb{D}\in\mc{A}(\ll;n)\}.\]
The goal of this section is to prove the following theorem, which is the case $b=0$ in Conjecture~\ref{conj:main}.
\begin{theorem}\label{thm:linear-strand}
 If we let $d=|\ll|$ then we have the following equality in $K_0(\g)$:
 \[[H_d(\tl{\bf R}(I_{\ll}))] = \sum_{\bb{D}\in\mc{A}^{\circ}(\ll;n)} [\LL_{\ll(\bb{D})}].\]
\end{theorem}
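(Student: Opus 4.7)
The plan is to realize $H_d(\tl{\bf R}(I_{\ll}))$ as an explicit $\g$-module quotient of the Kac module $\K_{\ll}$, and then read off its composition factors from the submodule lattice of $\K_{\ll}$ described in \cite{su-zhang}. Since $I_{\ll}$ is generated in degree $d=|\ll|$ and has no elements in lower degree, the portion of $\tl{\bf R}(I_{\ll})$ relevant to $H_d$ collapses to a single map $\phi\colon M^{\vee}_{d+1}\oo E \to M^{\vee}_d\oo E$, so that $H_d(\tl{\bf R}(I_{\ll}))=\coker(\phi)$. Cauchy's formula identifies $M^{\vee}_d\oo E = \K_{\ll}$, while Pieri's rule applied to the multiplication $S_1\oo(I_{\ll})_d\to(I_{\ll})_{d+1}$ identifies $M^{\vee}_{d+1}\oo E = \bigoplus_{\mu}\K_{\mu}$, where $\mu$ ranges over the partitions in $\bb{N}^n_{\dom}$ obtained from $\ll$ by adding a single box. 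By Lemma~\ref{eq:g0-equiv-is-gequiv} together with the $\Hom$ computation~(\ref{eq:Hom-g=C}), the map $\phi$ decomposes as a sum of non-zero $\g$-equivariant maps $\phi_{\mu}\colon\K_{\mu}\to\K_{\ll}$, each unique up to scalar.

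Set $N_{\mu}:=\operatorname{im}(\phi_{\mu})\subseteq\K_{\ll}$; this is the $\g$-submodule generated by the primitive weight-$\mu$ vector lying in the $\g_0$-isotypic component $\SS_{\mu}W_0\oo\SS_{\mu}W_1\subseteq\K_{\ll}$. The main structural step is to translate the submodule-lattice description in \cite{su-zhang} into the Dyck pattern language of Section~\ref{subsec:parts+Dyck}, in order to identify $N_{\mu}$ as the unique submodule of $\K_{\ll}$ whose composition factors are exactly those $\LL_{\ll(\bb{D})}$ for $\bb{D}\in\mc{K}(\ll;n)$ that contain the length-one Dyck path at the single box $\mu\setminus\ll$. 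Since by Theorem~\ref{thm:compos-Kac} the composition factors of $\K_{\ll}$ are multiplicity-free, the composition factors of the sum $\sum_{\mu}N_{\mu}$ are given by the union of the corresponding sets -- namely, those $\bb{D}\in\mc{K}(\ll;n)$ containing at least one length-one Dyck path at a position addable to $\ll$.

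To conclude, I would establish the combinatorial claim that \emph{every} pattern $\bb{D}\in\mc{K}(\ll;n)$ containing a length-one Dyck path necessarily contains at least one such path located at a position addable to $\ll$. This is expected to follow by a peeling argument exploiting admissibility condition~(3) of Section~\ref{subsec:parts+Dyck}: a length-one path at a non-addable box would force adjacent paths ``filling in the gap'' toward $\ll$, producing a length-one path at the outer boundary of $\ll$. Granting this lemma, the composition factors of $\sum_{\mu}N_{\mu}$ are indexed precisely by $\mc{K}(\ll;n)\setminus\mc{A}^{\circ}(\ll;n)$, and combining with Theorem~\ref{thm:compos-Kac} gives the desired equality $[\coker(\phi)]=\sum_{\bb{D}\in\mc{A}^{\circ}(\ll;n)}[\LL_{\ll(\bb{D})}]$.

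The hardest part will be the identification of $N_{\mu}$ inside $\K_{\ll}$: one must match the algebraic description ``submodule generated by the $\g_0$-isotypic copy $\SS_{\mu}W_0\oo\SS_{\mu}W_1$'' with the combinatorial condition ``Dyck patterns containing the single-box length-one path at $\mu\setminus\ll$,'' which requires a careful dictionary between \cite{su-zhang} and the Dyck pattern conventions adopted here. The combinatorial lemma forcing a length-one path at an addable box is a secondary and more self-contained obstacle.
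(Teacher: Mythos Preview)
Your approach is essentially the paper's: write $H_d(\tl{\bf R}(I_{\ll}))=\coker\bigl(\bigoplus_{\mu}\K_{\mu}\to\K_{\ll}\bigr)$ via BGG, identify the image as the $\g$-submodule generated by the primitive weight-$\mu$ vectors, and invoke \cite[Theorem~5.18]{su-zhang} (translated into Dyck patterns) to see that its composition factors are exactly the $\LL_{\ll(\bb{D})}$ with $\bb{D}\in\mc{K}(\ll;n)$ containing some length-one path. The two obstacles you anticipate are not real: the paper applies \cite[Theorem~5.18]{su-zhang} once to the whole image rather than to each $N_{\mu}$ separately, and the ``peeling'' lemma you propose is unnecessary because admissibility condition~(\ref{item:cond3}) already forces every length-one path in a bullet-free pattern to sit at a box addable to $\ll$ (if $\{(x,y)\}$ had, say, $(x-1,y)\in D_j$, then condition~(\ref{item:cond3}) would require the single box $(x,y)$ to absorb both the N-neighbor of the first box and the E-neighbor of the last box of $D_j$, which is impossible).
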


\begin{example}\label{ex:1st-strand}
 Consider again the case when $n=3$ and $\ll=(3,2)$. The only Dyck patterns in Example~\ref{ex:BIlambda} that contain no bullets are the ones for which $\ll(\bb{D})$ is $(3,2)$ or $(4,4)$, and as we have seen they are precisely the ones contributing to the first linear strand of the Betti table (\ref{eq:betti-I32}). These patterns are also the only patterns in Example~\ref{ex:compos-Kac} that contain no Dyck paths of length one.
\end{example}

\begin{proof}[Proof of Theorem~\ref{thm:linear-strand}]
 Using the BGG correspondence as described in Section~\ref{subsec:BGG} we get that
 \[H_d(\tl{\bf R}(I_{\ll})) = \coker\left(\bigoplus_{\substack{|\mu|=d+1 \\ \mu\geq\ll}}\K_{\mu} \overset{\psi}{\lra} \K_{\ll} \right).\]
 Since each $\K_{\mu}$ is generated by a $\g_0$-highest weight vector in $\SS_{\mu}W_0\oo\SS_{\mu}W_1$, the image of $\psi$ is the submodule of $\K_{\ll}$ generated by the primitive weight vectors of weight $\mu$, as $\mu$ ranges over the partitions of $d+1$ with $\mu\geq\ll$. If we interpret the composition factors of $\K_{\ll}$ as in Theorem~\ref{thm:compos-Kac}, it follows from \cite[Theorem~5.18]{su-zhang} that the composition factors of $\operatorname{Im}\psi$ are the modules $\LL_{\ll(\bb{D})}$ where $\bb{D}=(D_1,\cdots,D_r)$ range over patterns $\bb{D}\in\mc{K}(\ll;n)$ satisfying the condition $|D_i|=1$ for some~$i$. Since $\mc{A}^{\circ}(\ll;n)$ has an equivalent description as
 \[\mc{A}^{\circ}(\ll;n) = \{\bb{D}=(D_1,\cdots,D_r)\in\mc{K}(\ll;n) : |D_i|\geq 3\mbox{ for all }i=1,\cdots,r\}\]
it follows that $\coker(\psi)$ has composition factors $\LL_{\ll(\bb{D})}$ where $\bb{D}\in\mc{A}^{\circ}(\ll;n)$, as desired.
\end{proof}

\subsection{Castelnuovo--Mumford regularity}\label{subsec:regularity}

Recalling that the bullet size $b(\bb{D})$ measures the vertical displacement of $\LL_{\ll(\bb{D})}$ within the Betti table of $I_{\ll}$, we see that Conjecture~\ref{conj:main} implicitly describes the Castelnuovo--Mumford regularity of an ideal $I_{\ll}$ as
\begin{equation}\label{eq:regularity}
 \reg(I_{\ll}) = |\ll| + \max_{\bb{D}\in\mc{A}(\ll;n)} b(\bb{D}).
\end{equation}
In \cite[Theorem~5.1]{raicu-weyman} we proved (with the convention that $\ll_{n+1}=-1$) that
\begin{equation}\label{eq:proved-reg}
\reg(I_{\ll})=\max_{\substack{p=1,\cdots,n \\ \ll_{p}>\ll_{p+1}}}(n\cdot \ll_p+(p-2)\cdot(n-p)).
\end{equation}
In what follows we show that (\ref{eq:regularity}) and (\ref{eq:proved-reg}) are equivalent, so the prediction (\ref{eq:regularity}) is indeed accurate.

Suppose first that $1\leq p\leq n$ is such that $\ll_p>\ll_{p+1}$. We will construct a pattern $\bb{D}\in\mc{A}(\ll;n)$ satisfying 
\[|\ll| + b(\bb{D})\geq n\cdot \ll_p+(p-2)\cdot(n-p)\]
proving that the quantity in (\ref{eq:regularity}) is greater than or equal to that in (\ref{eq:proved-reg}). We construct $\bb{D}$ by considering a succession of hooks of minimal length ($3,5,7,\cdots$) around the corner $(\ll_p,p)$ of the partition $\ll$. More precisely, we consider the Dyck pattern $\bb{D}$ defined as $\bb{D}=(D_1,D_2,\cdots,D_{n-p};\bb{B})$, where
\begin{equation}\label{eq:corner-hooks}
D_i = \{(\ll_p+i-j,p+i) : j=0,\cdots,i\} \cup \{(\ll_p+i,p+i-j) : j=1,\cdots,i\},
\end{equation}
and $\bb{B}$ (or equivalently $\ll(\bb{D})$) are as determined in Lemma~\ref{lem:adm-pat-determines-bullets} by $\ll$ and the paths $D_i$. We have that
\[d(\bb{D}) = \sum_{i=1}^{n-p}(2i+1) = (n-p+1)^2 - 1 = (n-p+2)(n-p).\]
Since $(\ll_p+n-p,n)\in D_{n-p} \subseteq \ll(\bb{D})$, it follows that $\ll(\bb{D})$ contains the rectangular partition $n\times(\ll_p+n-p)$, so that $|\ll(\bb{D})| \geq n\cdot (\ll_p + n - p)$ and therefore
\[|\ll| + b(\bb{D}) = |\ll(\bb{D})| - d(\bb{D}) \geq n\cdot (\ll_p + n - p) - (n-p+2)(n-p) = n\cdot \ll_p + (p-2)\cdot(n-p).\]


Suppose now that $\bb{D}_{\max}\in\mc{A}(\ll;n)$ is a pattern that maximizes (\ref{eq:regularity}). We prove that the quantity in (\ref{eq:proved-reg}) is greater than or equal to that in (\ref{eq:regularity}) by finding a value of $p$ for which $\ll_p>\ll_{p+1}$ and
\begin{equation}\label{eq:52>=51}
n\cdot \ll_p+(p-2)\cdot(n-p)\geq |\ll| + b(\bb{D}_{\max}).
\end{equation}
This is sufficient to conclude the equivalence between (\ref{eq:regularity}) and (\ref{eq:proved-reg}). We consider the right-most corner of the partition $\ll(\bb{D}_{\max})$ which is not a corner of $\ll$: it has coordinates $(\ll_i+r,i+r)$ for some $1\leq i\leq n$ and $1\leq r\leq n-i$. Since $(\ll_i+1,i+1)$ belongs to some Dyck path $D_i$ in $\bb{D}_{\max}$, it follows that there are no bullets $(x,y)\in\bb{B}$ with $x\geq\ll_i$ and $y\geq i$. Since $(\ll_i+r,i+r)$ is the rightmost corner of $\ll(\bb{D}_{\max})$ not in $\ll$, it follows that no bullets $(x,y)\in\bb{B}$ have $x>\ll_i + r$, and since $r\leq n-i$ we get that no bullets have $x>\ll_i+n-i$. It follows that
\begin{equation}\label{eq:ineq-Dmax}
|\ll| + b(\bb{D}_{\max}) \leq \ll_i + (n-i)\cdot(\ll_i-1) + \sum_{j=1}^{i-1}\max(\ll_j,\ll_i+n-i).
\end{equation}
If $\ll_j\leq \ll_i + (n-i)$ for all $j<i$ then the above inequality becomes
\[|\ll| + b(\bb{D}_{\max}) \leq \ll_i + (n-i)\cdot(\ll_i-1) + (i-1)\cdot(\ll_i+n-i) = n\cdot \ll_i + (i-2)\cdot(n-i),\]
so we may choose $p=i$ in (\ref{eq:52>=51}). Otherwise, let $1\leq p\leq i-1$ be such that
\[ \ll_p > \ll_i+(n-i) \geq \ll_{p+1}\]
and consider the pattern $\bb{D}$ with Dyck paths given by (\ref{eq:corner-hooks}). We have that
\[
\begin{aligned}
|\ll| + b(\bb{D}) &= (n-p)\cdot(\ll_p-1) + \ll_p + \sum_{j=1}^{p-1}\max(\ll_j,\ll_p+n-p) \\
&> (n-p+1)\cdot(\ll_p-1) + \sum_{j=1}^{p-1}\max(\ll_j,\ll_i+n-i) \\
&\geq (n-p+1)\cdot(\ll_i+n-i) + \sum_{j=1}^{p-1}\max(\ll_j,\ll_i+n-i).
\end{aligned}
\]
To contradict the maximality of $\bb{D}_{\max}$ using (\ref{eq:ineq-Dmax}) it is then enough to check that
\[
\begin{aligned}
(n-p+1)\cdot(\ll_i+n-i) &\geq \ll_i + (n-i)\cdot(\ll_i-1) + \sum_{j=p}^{i-1}\max(\ll_j,\ll_i+n-i) \\
&= \ll_i + (n-i)\cdot(\ll_i-1) + (i-p)\cdot(\ll_i+n-i) \\
&= \ll_i\cdot(n-p+1) + (i-p-1)\cdot(n-i).
\end{aligned}
\]
This inequality can be rewritten as
\[(n-p+1)\cdot(n-i)\geq(i-p-1)\cdot(n-i)\]
which holds because $n+1\geq i-1$.

\subsection{Rectangular ideals}\label{subsec:rectangular}

In this section we show that when $\ll = a\times b$ is a rectangular partition, the conclusion of Conjecture~\ref{conj:main} coincides with the main theorem in \cite{raicu-weyman-syzygies}, and is therefore correct. To do so, we will consider a different encoding of the information in (\ref{eq:conj-main}), as follows. We introduce a variable $w$ that keeps track of cohomological shifts, and define the \defi{$\g$-equivariant Betti polynomial} of $I_{\ll}$ to be $B_{\ll}(w)\in K_0(\g)[w]$ defined by
\[B_{\ll}(w) = \sum_{\substack{j\in\bb{Z} \\ \mu\in\bb{N}^n_{\dom}}} m_{\mu,j} \cdot [\LL_{\mu}]\cdot w^j\]
where $m_{\mu,j}$ are non-negative multiplicities uniquely determined by the equalities
\[\left[H_{t}(\tl{\bf R}(I_{\ll})) \right] = \sum_{\mu\in\bb{N}^n_{\dom}} m_{\mu,|\mu| - t} \cdot [\LL_{\mu}] \mbox{ for all }t\in\bb{Z}.\]
The reason for these conventions is that since $\LL_{\mu}$ is generated in degree $|\mu|$, if the corresponding linear complex of $S$-modules lies within the row indexed by $t$ of the Betti table of $I_{\ll}$ then its initial term must be located in cohomological degree $|\mu|-t$ (see Example~\ref{ex:BIlambda}). Recalling the observation that $d(\bb{D})$ measures the horizontal displacement of $\LL_{\ll(\bb{D})}$ in the Betti table (i.e. the cohomological shift of the corresponding linear complex) we can rewrite (\ref{eq:conj-main}) as
\begin{equation}\label{eq:new-conj-main}
  B_{\ll}(w) = \sum_{\bb{D}\in\mc{A}(\ll;n)} \bb{L}_{\ll(\bb{D})} \cdot w^{d(\bb{D})}.
\end{equation}

Returning to the case when $\ll = a\times b$ is a rectangular partition, we note that the Dyck paths in a $\ll$-admissible Dyck pattern are hooks centered at $(b+i,a+i)$, whose length increases with $i$. More precisely, if $\bb{D}\in\mc{A}(\ll;n)$ then $\bb{D} = (D_1,D_2,\cdots,D_q;\bb{B})$ where $q\leq n-a$, $|D_i| = 2l_i+1$ for some positive integers $l_i$ satisfying
\[1\leq l_1<l_2<\cdots<l_q\leq \min(a,b) + q -1,\]
and each $D_i$ is a hook consisting of the boxes
\[D_i = \{(b+i-j,a+i) : j=0,\cdots,l_i\} \cup \{(b+i,a+i-j) : j=1,\cdots,l_i\},\]
whereas $\bb{B}$ is determined by the fact that $\ll(\bb{D}) = (a+q)\times(b+q)$ (see Lemma~\ref{lem:adm-pat-determines-bullets}). If we make the change of variable $t_i = l_i - i$ for $i=1,\cdots,q$, we see that
\[0\leq t_1\leq\cdots\leq t_q\leq \min(a,b)-1,\]
that is the patterns $\bb{D}\in\mc{A}(\ll;n)$ for which $\ll(\bb{D}) = (a+q)\times (b+q)$ are indexed by partitions $\ul{t}$ contained inside the $q\times(\min(a,b)-1)$ rectangular partition, and moreover their corresponding Dyck size is computed~by
\[ d(\bb{D}) = \sum_{i=1}^q (2l_i + 1) = \sum_{i=1}^q (2t_i + 2i + 1) = q^2 + 2q + 2|\ul{t}|.\]
Using the notation for Gauss polynomials from \cite[(1.7)]{raicu-weyman-syzygies} we have
\[ \sum_{0\leq t_1\leq\cdots\leq t_q\leq \min(a,b)-1} w^{2|\ul{t}|} = {q+\min(a,b)-1\choose q}_{w^2},\]
which shows that (\ref{eq:new-conj-main}) specializes to the following formula, equivalent to that of \cite[Theorem~3.1]{raicu-weyman-syzygies}:
\[B_{a\times b}(w) = \sum_{q=0}^{n-a} \LL_{(a+q)\times(b+q)} \cdot w^{q^2+2q} \cdot {q+\min(a,b)-1\choose q}_{w^2}.\] 

\section*{Acknowledgements}
Experiments with the computer algebra software Macaulay2 \cite{M2} have provided numerous valuable insights. Raicu acknowledges the support of the Alfred P. Sloan Foundation, and of the National Science Foundation Grant No.~1600765. Weyman acknowledges partial support of the Sidney Professorial Fund and of the National Science Foundation grant No.~1400740.

	\begin{bibdiv}
		\begin{biblist}

\bib{akin-buchsbaum-weyman}{article}{
   author={Akin, Kaan},
   author={Buchsbaum, David A.},
   author={Weyman, Jerzy},
   title={Resolutions of determinantal ideals: the submaximal minors},
   journal={Adv. in Math.},
   volume={39},
   date={1981},
   number={1},
   pages={1--30},
   issn={0001-8708},
   review={\MR{605350}},
   doi={10.1016/0001-8708(81)90055-4},
}

\bib{brundan}{article}{
   author={Brundan, Jonathan},
   title={Kazhdan-Lusztig polynomials and character formulae for the Lie
   superalgebra $\gl(m|n)$},
   journal={J. Amer. Math. Soc.},
   volume={16},
   date={2003},
   number={1},
   pages={185--231},
   issn={0894-0347},
   review={\MR{1937204}},
}

\bib{eisenbud-syzygies}{book}{
   author={Eisenbud, David},
   title={The geometry of syzygies},
   series={Graduate Texts in Mathematics},
   volume={229},
   note={A second course in commutative algebra and algebraic geometry},
   publisher={Springer-Verlag, New York},
   date={2005},
   pages={xvi+243},
   isbn={0-387-22215-4},
   review={\MR{2103875}},
}

\bib{M2}{article}{
          author = {Grayson, Daniel R.},
          author = {Stillman, Michael E.},
          title = {Macaulay 2, a software system for research
                   in algebraic geometry},
          journal = {Available at \url{http://www.math.uiuc.edu/Macaulay2/}}
        }

\bib{lascoux}{article}{
   author={Lascoux, Alain},
   title={Syzygies des vari\'et\'es d\'eterminantales},
   language={French},
   journal={Adv. in Math.},
   volume={30},
   date={1978},
   number={3},
   pages={202--237},
   issn={0001-8708},
   review={\MR{520233 (80j:14043)}},
   doi={10.1016/0001-8708(78)90037-3},
}

\bib{pragacz-weyman}{article}{
   author={Pragacz, Piotr},
   author={Weyman, Jerzy},
   title={Complexes associated with trace and evaluation. Another approach
   to Lascoux's resolution},
   journal={Adv. in Math.},
   volume={57},
   date={1985},
   number={2},
   pages={163--207},
   issn={0001-8708},
   review={\MR{803010}},
   doi={10.1016/0001-8708(85)90052-0},
}

\bib{raicu-regularity}{article}{
   author={Raicu, Claudiu},
   title={Regularity and cohomology of determinantal thickenings},
   journal = {Proc. Lond. Math. Soc.},
   volume={116},
   date={2018},
   number={2},
   pages={248--280},
 }

\bib{raicu-weyman}{article}{
   author={Raicu, Claudiu},
   author={Weyman, Jerzy},
   title={Local cohomology with support in generic determinantal ideals},
   journal={Algebra \& Number Theory},
   volume={8},
   date={2014},
   number={5},
   pages={1231--1257},
   issn={1937-0652},
   review={\MR{3263142}},
   doi={10.2140/ant.2014.8.1231},
}

\bib{raicu-weyman-syzygies}{article}{
   author={Raicu, Claudiu},
   author={Weyman, Jerzy},
   title={The syzygies of some thickenings of determinantal varieties},
   journal={Proc. Amer. Math. Soc.},
   volume={145},
   date={2017},
   number={1},
   pages={49--59},
   issn={0002-9939},
   review={\MR{3565359}},
   doi={10.1090/proc/13197},
}

\bib{sam}{article}{
   author={Sam, Steven V},
   title={Derived supersymmetries of determinantal varieties},
   journal={J. Commut. Algebra},
   volume={6},
   date={2014},
   number={2},
   pages={261--286},
   issn={1939-0807},
   review={\MR{3249839}},
}

\bib{serganova}{article}{
   author={Serganova, Vera},
   title={Kazhdan-Lusztig polynomials and character formula for the Lie
   superalgebra $\gl(m|n)$},
   journal={Selecta Math. (N.S.)},
   volume={2},
   date={1996},
   number={4},
   pages={607--651},
   issn={1022-1824},
   review={\MR{1443186}},
}

\bib{su-zhang-char}{article}{
   author={Su, Yucai},
   author={Zhang, R. B.},
   title={Character and dimension formulae for general linear superalgebra},
   journal={Adv. Math.},
   volume={211},
   date={2007},
   number={1},
   pages={1--33},
   issn={0001-8708},
   review={\MR{2313526}},
}

\bib{su-zhang}{article}{
   author={Su, Yucai},
   author={Zhang, R. B.},
   title={Generalised Jantzen filtration of Lie superalgebras I},
   journal={J. Eur. Math. Soc. (JEMS)},
   volume={14},
   date={2012},
   number={4},
   pages={1103--1133},
   issn={1435-9855},
   review={\MR{2928846}},
}

\bib{zinn-justin}{article}{
   author={Shigechi, Keiichi},
   author={Zinn-Justin, Paul},
   title={Path representation of maximal parabolic Kazhdan-Lusztig
   polynomials},
   journal={J. Pure Appl. Algebra},
   volume={216},
   date={2012},
   number={11},
   pages={2533--2548},
   issn={0022-4049},
   review={\MR{2927185}},
}

\bib{weyman}{book}{
   author={Weyman, Jerzy},
   title={Cohomology of vector bundles and syzygies},
   series={Cambridge Tracts in Mathematics},
   volume={149},
   publisher={Cambridge University Press, Cambridge},
   date={2003},
   pages={xiv+371},
   isbn={0-521-62197-6},
   review={\MR{1988690}},
   doi={10.1017/CBO9780511546556},
}

		\end{biblist}
	\end{bibdiv}

\end{document}